\newcommand{\CC}{\mathbb{C}}
\newcommand{\NN}{\mathbb{N}}
\newcommand{\Zn}{\mathbb{Z}_n}
\newcommand{\KP}{\mathbb{KP}}
\newcommand{\Tr}{\mathrm{Tr}}
\newcommand{\cop}{\Delta}
\newcommand{\hw}{\int_{\KP}\!\!}
\newcommand{\X}{\mathfrak{X}}
\newcommand{\hwn}{\int_{\KP_n}\!\!}
\newcommand{\indic}{\mathds{1}}
\newcommand{\Xuv}[2]{\X^{#1,#2}}
\newcommand{\modu}[1]{\left\lvert #1 \right\rvert}
\newcommand{\norm}[1]{\left\lVert #1 \right\rVert}
\newcommand{\Fo}{\mathcal{F}}
\DeclareMathOperator{\lcm}{lcm}
\DeclareMathOperator{\id}{id}
\DeclareMathOperator{\LinSpan}{span}
\newtheorem{theorem}{Theorem}[section]
\newtheorem{lemma}{Lemma}[section]
\newtheorem{proposition}{Proposition}[section]
\newtheorem{definition}{Definition}[section]
\newtheorem{example}{Example}[section]
\newtheorem{remark}{Remark}[section]
\title{Random walks on finite quantum groups}
\author{Isabelle Baraquin}
\address{Laboratoire de math\'ematiques de Besan\c{c}on, UMR CNRS 6623, Universit\'e Bourgogne Franche-Comt\'e, 16 route de Gray, 25030 Besan\c{c}on cedex, France}
\email{isabelle.baraquin@univ-fcomte.fr}
\begin{document}


\begin{abstract}
In this paper we study convergence of random walks, on finite quantum groups, arising from linear combination of irreducible characters. We bound the distance to the Haar state and determine the asymptotic behavior, i.e. the limit state if it exists. We note that the possible limits are any central idempotent state. We also look at cut-off phenomenon in the Sekine finite quantum groups.\\
Keywords: convergence of random walks \and finite quantum group \and Sekine quantum groups \and central idempotent state \and representation theory\\
2010 Mathematics Subject Classification: 46L53, 60B15, 20G42 (, 81R50)
\end{abstract}

\maketitle

\section{Introduction}

Given a finite group $G$, and a current position $h$ in $G$, choose randomly, according to a fixed probability distribution, an element $g$, and move to $gh$. If you repeat this procedure, with the same probability distribution $\pi$, you will get random positions $h_k = g_k g_{k-1} \ldots g_{1}$ which define a walk on the Cayley graph of $G$. A frequently asked question is when the distribution $\pi^{\star k}$ of the current position $h_k$ is close to the uniform distribution on $G$. Diaconis and Shahshahani answer this question in \cite{RwSn} for the case of permutation groups, by using representation theory. They also observe that after less than $t_n$ random transpositions a deck of $n$ cards is not well mixed, but after more than $t_n$ random transpositions it is well mixed, where $t_n = \frac{1}{2}n \log(n)$. This is called a cut-off. For more details on the cut-off phenomenon in classical finite groups, the reader can look at the survey of Saloff-Coste \cite{SaloffCoste}.

Finite quantum groups are a generalization of classical finite groups. Despite of their name of group, we are studying algebras endowed with an additional structure.
\begin{definition}
A finite quantum group is a pair  $\mathbb{G} = (\mathcal{A}, \cop)$, where $\mathcal{A}$ is a unital finite dimensional $*$-algebra (eventually noncommutative) and $\cop \colon \mathcal{A} \to \mathcal{A}\otimes \mathcal{A}$ is a unital $*$-homomorphism, called coproduct, such that it is coassociative, i.e.
\[\left( \cop \otimes \id_{\mathcal{A}} \right) \circ \cop = \left( \id_{\mathcal{A}} \otimes \cop \right) \circ \cop\]
as $*$-homomorphisms from $\mathcal{A}$ to $\mathcal{A} \otimes \mathcal{A} \otimes \mathcal{A}$, and such that it verifies the density in $\mathcal{A} \otimes \mathcal{A}$ of the two algebras $(1_{\mathcal{A}} \otimes \mathcal{A}) \cop(\mathcal{A})  = \LinSpan\left\{(1_{\mathcal{A}} \otimes a)\cop(b) , a, b \in \mathcal{A}\right\}$ and $(\mathcal{A} \otimes 1_{\mathcal{A}})\cop(\mathcal{A}) = \LinSpan\left\{(a \otimes 1_{\mathcal{A}})\cop(b) , a, b \in \mathcal{A}\right\}$.
\end{definition}

Note that we can define a finite quantum group thanks to a classical finite group $(G,\cdot)$. The set of complex-valued continuous functions on $G$, denoted $C(G)$, is endowed with a structure of (commutative) unital $*$-algebra. Identifying the tensor product $C(G) \otimes C(G)$ with the $*$-algebra $C(G \times G)$, one can check that the application $\cop \colon C(G) \to C(G \times G)$, defined by $\cop(f)(s,t) = f(s\cdot t)$ satisfies the coassociativity relation and the density properties, called quantum cancellation rules. Following this example, we denote the algebra $\mathcal{A}$ of the quantum group $\mathbb{G} = (\mathcal{A}, \cop)$ by $C(\mathbb{G})$.

We need to transfer the properties of probability measures defined on a finite group $G$, to some object defined on $C(G)$. Let us note that if we look at integration of continuous functions with respect to a probability measure, we obtain a state on $C(G)$, i.e.\ a linear functional which preserves the unit and positivity. In the framework of quantum groups, the convolution product of two probability measures $\mu_1$ and $\mu_2$ is given by $\mu_1 \star \mu_2 = (\mu_1 \otimes \mu_2) \circ \cop$. This formula defines the convolution product of two states defined on a quantum group $\mathbb{G} = (C(\mathbb{G}), \cop)$.

Moreover, every classical finite group can be equipped with a Haar measure $\lambda$, satisfying the identity
\[\int_G f(g \cdot s)\,\mathrm{d}\lambda(s) = \int_G f(s)\,\mathrm{d}\lambda(s) = \int_G f(s \cdot g)\,\mathrm{d}\lambda(s)\]
which becomes $(\lambda \otimes \id_{C(G)}) \circ \cop = \lambda(\cdot) \indic_G = (\id_{C(G)} \otimes \lambda) \circ \cop$, with the coproduct defined above, where $\lambda$ denotes the integration with respect to the Haar measure. This leads us to the following definition:
\begin{definition}
A Haar state $\int_{\mathbb{G}}$ on a compact quantum group $\mathbb{G} = (C(\mathbb{G}), \cop)$ is a state on $C(\mathbb{G})$ such that
\[(\int_{\mathbb{G}} \otimes \id_{C(\mathbb{G})})\circ \cop = 1_{C(\mathbb{G})} \int_{\mathbb{G}}(\cdot) = (\id_{C(\mathbb{G})} \otimes \int_{\mathbb{G}})\circ \cop\text{ .}\]
\end{definition}
Note that every finite quantum group admits a unique Haar state.

Random walks on finite quantum groups were first studied by Franz and Gohm \cite{uweKP}. We are now looking at convolution semigroups of states $\{\phi^{\star k}\}_{k \in \NN}$ and their convergence to the Haar state. Recently, McCarthy \cite{McCarthy} developed the Diaconis-Shahshahani theory for finite quantum groups. In particular, he gave upper and lower bounds for the distance to the Haar state, exploiting the corresponding tools, but he could not prove a cut-off in the Sekine family of finite quantum groups. He defined  \cite{cutOffJP} the quantum total variation distance as a norm on functions defined on $C(\mathbb{G})$ by
\[\norm{\phi - \psi}_{QTV} = \sup_{p \in C(\mathbb{G}) \text{ a projection}} \modu{\phi(p) - \psi(p)} \text{ .}\]

For a finite quantum group $\mathbb{G}$, let us denote by $I(\mathbb{G})$ the set of its irreducible representations, i.e.\ invertible elements $\alpha = (\alpha_{ij})_{1 \leq i,j \leq n}$ of $M_n(C(\mathbb{G}))$ such that, for any $1 \leq i,j \leq n$, $\cop(\alpha_{ij}) = \sum\limits_k \alpha_{ik} \otimes \alpha_{kj}$
For a representation $\alpha \in I(\mathbb{G})$, $d_\alpha$ denotes its dimension and $\chi_\alpha$ the associated character.

We shall also use the Fourier transform. For $x$ an element in $C(\mathbb{G})$, let us denote by $\Fo(x)$ the linear functional defined on $C(\mathbb{G})$ by
\[\Fo(x)(y) = \int_{\mathbb{G}}\!\! yx \text{ .}\]
This allows us to define a convolution product on $C(\mathbb{G})$, also denoted $\star$ like the product of linear functionals on $C(\mathbb{G})$, thanks to the formula
\[\Fo(a \star b) = \Fo(a) \star \Fo(b) := \left(\Fo(a) \otimes \Fo(b)\right) \circ \Delta \text{ .}\]
Using Sweedler notation, we get $a \star b = \sum\limits_{(b)} b_{(2)} \Fo(a)\left(S(b_{(1)})\right)$ in $C(\mathbb{G})$.

Here, we restrict ourselves to random walks arising from the Fourier transform of linear combination of irreducible characters, it means from elements of the central algebra. This work is separated into two parts. The second section is devoted to the study of the Kac-Paljutkin group $\KP$. In the last one, we look at the Sekine family $\KP_n$. We study cut-off phenomenon in subsection \ref{cutoff} and list all the possible types of asymptotic behavior for $n$ fixed in Theorem \ref{BigTh}. It allows us to determine all the central idempotent states on these finite quantum groups, by noting that the central algebra is the center of the algebra with respect to the convolution product.

\section{Random walks in $\KP$}

First let us look at random walks on the eight-dimensional Kac-Paljutkin finite quantum group $\KP$. This is the smallest Hopf-von Neumann algebra which is neither commutative nor cocommutative. We use the notations given in \cite{arXiv1}. In particular, the multi-matrix algebra $C(\KP) = \CC \oplus \CC \oplus \CC \oplus \CC \oplus M_2(\CC)$ is endowed with the canonical basis $\{e_1, e_2, e_3, e_4, E_{11}, E_{12}, E_{21}, E_{22}\}$, where $E_{ij}$ is the image of a matrix unit. The unit is $\indic = 1 \oplus 1 \oplus 1 \oplus 1 \oplus I_2$, with $I_2$ the identity matrix. The Haar state  is given by
\[\hw \left(x_1 \oplus x_2 \oplus x_3 \oplus x_4 \oplus \begin{pmatrix}c_{11} & c_{12}\\c_{21} & c_{22}\end{pmatrix}\right) = \frac{1}{8} \left( x_1 + x_2 + x_3 + x_4 + 2 (c_{11} + c_{22}) \right) \text{ .}\]

As already noted by Kac and Paljutkin \cite{KPtrad}, the elements of $I(\KP)$ are
\begin{align*}
\rho(1) &= 1 \oplus 1 \oplus 1 \oplus 1 \oplus \begin{pmatrix}1&0\\0&1\end{pmatrix} \text{ ,}\\
\rho(2) &= 1 \oplus -1 \oplus -1 \oplus 1 \oplus \begin{pmatrix}1&0\\0&-1\end{pmatrix} \text{ ,}\\
\rho(3) &= 1 \oplus -1 \oplus -1 \oplus 1 \oplus \begin{pmatrix}-1&0\\0&1\end{pmatrix} \text{ ,}\\
\rho(4) &= 1 \oplus 1 \oplus 1 \oplus 1 \oplus \begin{pmatrix}-1&0\\0&-1\end{pmatrix} \text{ and}
\end{align*}
\[\X = \begin{pmatrix}
1 \oplus -1 \oplus 1 \oplus -1 \oplus \begin{pmatrix}0&0\\0&0\end{pmatrix}&
0 \oplus 0 \oplus 0 \oplus 0 \oplus \begin{pmatrix}0&e^{\imath \frac{\pi}{4}}\\e^{-\imath \frac{\pi}{4}}&0\end{pmatrix}\\
0 \oplus 0 \oplus 0 \oplus 0 \oplus \begin{pmatrix}0&e^{-\imath \frac{\pi}{4}}\\e^{\imath \frac{\pi}{4}}&0\end{pmatrix}&
1 \oplus 1 \oplus -1 \oplus -1 \oplus \begin{pmatrix}0&0\\0&0\end{pmatrix}
\end{pmatrix}\]
which gives the character $\chi(\X) = 2 \oplus 0 \oplus 0 \oplus -2 \oplus \begin{pmatrix}0&0\\0&0\end{pmatrix}$. Note that $\rho(1)$ is the unit $\indic$ in $C(\KP)$, this is the trivial representation of $\KP$.

Let us fix an element $g = \sum\limits_{u = 1} ^4 g_u \rho(u) \;+ g_\X \chi(\X)$ of $C(\KP)$, with $g_\alpha \in \CC$ for every $\alpha \in I(\KP)$. Let us note that $\Fo(e_i)(e_k) = \frac{1}{8} \delta_{ik}$, $\Fo(e_i)(E_{kl}) = 0$, $\Fo(E_{ij})(e_k) = 0$ and $\Fo(E_{ij})(E_{kl}) = \frac{1}{4} \delta_{il} \delta_{jk}$. Then, $\Fo(g)$ is a state on $C(\KP)$ if and only if $\Fo(g)(\indic) = 1$ and $\Fo(g)$ is positive, which means that $\Fo(g)(xe_i) \geq 0$ and $\Fo(g)(\sum\limits_{i,j = 1}^2 a_{ij} E_{ij}) \geq 0$ for any non negative real number $x$, any $1 \leq i \leq 4$ and any positive semidefinite matrix $A = (a_{ij})_{1 \leq i,j \leq 2} \in M_2(\CC)$. These conditions are equivalent to:
\begin{align}
g_1 = 1& \nonumber\\
1 + g_2 + g_3 + g_4 \pm& 2 g_{\X} \geq 0 \nonumber\\
1 + g_2 - g_3 - g_4 &\geq 0 \label{HKP}\\
1 - g_2 + g_3 - g_4 &\geq 0 \nonumber\\
1 - g_2 - g_3 + g_4 &\geq 0 \nonumber
\end{align}
In particular, it implies that, for all $\alpha \in I(\KP)$, $g_\alpha$ is real and moreover
\begin{equation}
\modu{g_\alpha} \leq d_\alpha \text{ .}
\label{CsqKP}
\end{equation}

\subsection{Upper and lower bounds}
\begin{lemma}
For all $u \in \{2,3,4\}$ and all positive integer $k$, 
\begin{equation}
\label{KPBd}
\frac{1}{2} \modu{g_u}^k \leq \norm{\Fo(g)^{\star k} - \hw\,}_{QTV} \leq \sqrt{\frac{1}{4} \sum_{v = 2}^4 \modu{g_v}^{2k} + \frac{\modu{g_\X}^{2k}}{4^{k}}}
\end{equation}
\label{LemBd}
\end{lemma}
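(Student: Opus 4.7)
The plan is to reduce both inequalities to an explicit formula for the convolution power $g^{\star k}$ in $C(\KP)$. The Fourier transform intertwines the two convolution products, so $\Fo(g)^{\star k} = \Fo(g^{\star k})$, and since the unit satisfies $\indic = \rho(1)$ we have $\hw = \Fo(\rho(1))$; the quantity to estimate is therefore $\Fo(g^{\star k} - \rho(1))$ evaluated on projections.

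First I would establish the convolution table on irreducible characters,
\[ \chi_\alpha \star \chi_\beta \;=\; \frac{\delta_{\alpha\beta}}{d_\alpha}\,\chi_\alpha, \]
using the Sweedler formula $a \star b = \sum_{(b)} b_{(2)} \Fo(a)(S(b_{(1)}))$ from the introduction together with Peter--Weyl orthogonality $\hw \alpha_{ij}\beta_{kl}^{*} = d_\alpha^{-1}\delta_{\alpha\beta}\delta_{ik}\delta_{jl}$. Since $\rho(1),\dots,\rho(4)$ are one-dimensional (hence each equal to its own character) and $\chi(\X)$ has dimension $2$, expanding $g^{\star k}$ and using $g_1 = 1$ yields
\[ g^{\star k} \;=\; \rho(1) + \sum_{u=2}^{4} g_u^{k}\, \rho(u) + \frac{g_\X^{k}}{2^{k-1}}\,\chi(\X). \]

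For the upper bound I would run Cauchy--Schwarz against the state $\hw$. Given a projection $p \in C(\KP)$, set $q := p - \hw(p)\,\indic$; then $q$ is self-adjoint with $\hw q = 0$ and $\hw q^{2} = \hw(p)(1-\hw(p)) \leq \tfrac{1}{4}$. Writing $a := g^{\star k} - \rho(1)$, which is self-adjoint since the $g_v$ are real by \eqref{CsqKP} and since $\rho(u), \chi(\X)$ are self-adjoint, one has
\[ \Fo(g^{\star k})(p) - \hw(p) \;=\; \hw p\,a \;=\; \hw q\,a, \]
so Cauchy--Schwarz gives $\modu{\Fo(g^{\star k})(p) - \hw(p)} \leq \tfrac{1}{2}\sqrt{\hw a^{2}}$. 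A direct check on the matrix basis of $C(\KP)$ shows $\hw \rho(u)\rho(v) = \delta_{uv}$ for $u,v \in \{2,3,4\}$, $\hw \rho(u)\chi(\X) = 0$, and $\hw \chi(\X)^{2} = 1$, whence $\hw a^{2} = \sum_{u=2}^{4} g_u^{2k} + g_\X^{2k}/4^{k-1}$; dividing by four under the square root produces the stated upper bound.

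For the lower bound, for each $u \in \{2,3,4\}$ I would test the QTV supremum on $p_u := \tfrac{1}{2}(\indic + \rho(u))$; the matrix form of $\rho(u)$ makes it clear that $\rho(u)^{*} = \rho(u)$ and $\rho(u)^{2} = \indic$, so $p_u$ is genuinely a projection. The same $L^{2}$-orthogonalities yield
\[ \Fo(g)^{\star k}(p_u) - \hw(p_u) \;=\; \tfrac{1}{2}\hw \rho(u)(g^{\star k} - \rho(1)) \;=\; \tfrac{g_u^{k}}{2}, \]
and passing to absolute values (using $g_u \in \RR$) gives the claimed lower bound. The only nontrivial ingredient is the convolution formula on characters; once it is in hand, both estimates reduce to $L^{2}$-orthogonality computations in the explicit basis of $C(\KP)$, so the main obstacle is organising the bookkeeping rather than any substantial new idea.
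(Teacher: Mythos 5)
Your argument is correct, and it reaches the stated bounds by a genuinely different route from the paper. The paper invokes McCarthy's Quantum Diaconis--Shahshahani Upper and Lower Bound Lemmas as black boxes and reduces everything to computing the Fourier matrices $\widehat{\Fo(g)}(\beta)$ via the orthogonality relations (obtaining $\widehat{\Fo(g)}(\beta)=g_{\beta^{-1}}$ for $d_\beta=1$ and $\widehat{\Fo(g)}(\X)=\tfrac12 g_\X I_2$, then summing the traces). You instead work entirely on the algebra side: the convolution table $\chi_\alpha\star\chi_\beta=\delta_{\alpha\beta}d_\alpha^{-1}\chi_\alpha$ gives the closed form $g^{\star k}=\rho(1)+\sum_u g_u^k\rho(u)+2^{-(k-1)}g_\X^k\chi(\X)$ (which is exactly the identity $g^{\star k}=\sum_\alpha g_\alpha^k d_\alpha^{-(k-1)}\chi_\alpha$ the paper uses later, in the proof of its Proposition on non-Haar limits), and then you re-derive the two bounds from scratch: Cauchy--Schwarz against $\hw$ applied to $q=p-\hw(p)\indic$ together with the $L^2$-orthonormality of the characters for the upper bound, and evaluation on the spectral projections $p_u=\tfrac12(\indic+\rho(u))$ for the lower bound. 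All the computations you sketch check out ($\hw q^2=\hw(p)(1-\hw(p))\le\tfrac14$, $\hw a=0$ so $\hw pa=\hw qa$, $\hw\rho(u)\rho(v)=\delta_{uv}$, $\hw\chi(\X)^2=1$, and the resulting constants match the statement exactly). In effect you have unfolded the proofs of McCarthy's two lemmas in this eight-dimensional special case: your version buys self-containedness and makes visible where the factors $\tfrac14$ and $4^{-k}$ come from, while the paper's version is shorter and makes the link to the general Diaconis--Shahshahani machinery (and to the later Sekine computations, which reuse the same Fourier matrices) explicit.
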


\begin{proof}
The Quantum Diaconis-Shahshahani Upper Bound Lemma \cite[Lemma 5.3.8]{McCarthy} gives
\[\norm{\Fo(g)^{\star k} - \hw\,}_{QTV}^2 \leq \frac{1}{4} \sum_{\substack{\beta \in I(\KP)\\ \beta \neq \indic_\KP}} d_\beta \Tr\left(\widehat{\Fo(g)}(\beta)^{*k}\widehat{\Fo(g)}(\beta)^k\right)\]
with $\widehat{\Fo(g)}(\beta)$ the matrix composed by the $\Fo(g)(\beta_{ij})$'s for $\beta_{ij}$ the matrix coefficients of $\beta$.
On the other hand, by the Lower Bound Lemma \cite[Lemma 5.3.9]{McCarthy}, for all non trivial one-dimensional representation $\alpha$,
\[\norm{\Fo(g)^{\star k} - \hw\,}_{QTV} \geq \frac{1}{2} \modu{\widehat{\Fo(g)}(\alpha)}^k \text{ .}\]

Let us compute $\widehat{\Fo(g)}(\beta)$ for every $\beta \in I(\KP)$. First of all, let us note that, by linearity of $\Fo$,
\[\widehat{\Fo(g)}(\beta) = \sum_{\alpha \in I(\KP)}\hspace{-8pt}g_\alpha \widehat{\Fo(\chi_\alpha)}(\beta) = \left(\sum_{\alpha \in I(\KP)}\hspace{-8pt}g_\alpha \hw \beta_{ij} \chi_\alpha \right)_{1 \leq i,j \leq d_\beta} \text{ .}\]

Since the quantum group is finite, it is of Kac type. Hence the orthogonality relations for coefficients of irreducible representations lead to
\[\hw \beta_{ij} \chi_\alpha = \frac{\delta_{\alpha, \bar{\beta}}}{d_\alpha}\]
where $\bar{\beta} = \left((\beta_{ij})^*\right)_{1 \leq i,j \leq d_\beta}$. We clearly have $\bar{\beta} = \beta^{-1}$ when $d_\beta = 1$ and $\bar{\X} = \X$. Finally, we get $\widehat{\Fo(g)}(\beta) = g_{\beta^{-1}}$ when $d_\beta = 1$ and $\widehat{\Fo(g)}(\X) = \frac{1}{2} g_\X I_2$. Therefore, for $d_\beta = 1$,
\begin{align*}
&\Tr\left(\widehat{\Fo(g)}(\beta)^{*k}\widehat{\Fo(g)}(\beta)^k\right) = \modu{g_{\beta^{-1}}}^{2k}\\
\Tr&\left(\widehat{\Fo(g)}(\X)^{*k}\widehat{\Fo(g)}(\X)^k\right) = \frac{1}{2^{2k-1}} \modu{g_\X}^{2k} \text{ .}
\end{align*}
Hence, the Quantum Diaconis-Shahshahani Upper and Lower Bound Lemmas give the bounds announced in \eqref{KPBd}. \qed
\end{proof}

\subsection{Asymptotic behavior}

We can now determine the conditions for the random walk to converge to the Haar state. We then study the other cases, when the random walk does not converge to the Haar state.

\begin{theorem}
The random walk defined by $g = \sum\limits_{u = 1} ^4 g_u \rho(u) \;+ g_\X \chi(\X)$, satisfying conditions \eqref{HKP}, converges to the Haar state if and only if
\begin{equation}
\forall \alpha \in I(\KP)\setminus\{\indic_\KP\},\  \modu{g_\alpha} < d_\alpha \text{ .}
\label{KPcv}
\end{equation}
In this case, the distance to the Haar state decreases exponentially fast from the first step on.
\end{theorem}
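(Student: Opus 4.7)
The plan is to split the biconditional into its two implications and apply Lemma~\ref{LemBd} in each direction.

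For the ``if'' direction, I assume $\modu{g_\alpha} < d_\alpha$ for every $\alpha \in I(\KP) \setminus \{\indic_\KP\}$, i.e.\ $\modu{g_u} < 1$ for $u \in \{2,3,4\}$ and $\modu{g_\X} < 2$. Plugging these into the upper bound of \eqref{KPBd}, each term $\modu{g_v}^{2k}$ is geometric of ratio $\modu{g_v}^2 < 1$, and $\modu{g_\X}^{2k}/4^k = (\modu{g_\X}/2)^{2k}$ is geometric of ratio $(\modu{g_\X}/2)^2 < 1$. Summing these four geometric sequences and extracting the square root produces a geometrically decreasing majorant valid from $k = 1$ onwards, which is exactly the claimed exponential decay to the Haar state.

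For the ``only if'' direction, I argue by contrapositive: suppose some $\modu{g_\alpha} = d_\alpha$ with $\alpha \neq \indic_\KP$ (equality being allowed by \eqref{CsqKP}). If $\alpha = \rho(u)$ for some $u \in \{2,3,4\}$, then $\modu{g_u} = 1$ and the lower bound in \eqref{KPBd} yields $\norm{\Fo(g)^{\star k} - \hw\,}_{QTV} \geq \tfrac{1}{2}$ for every $k \geq 1$, so convergence fails.

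The delicate case, which I expect to be the main obstacle, is $\alpha = \X$ with $\modu{g_\X} = 2$, since Lemma~\ref{LemBd} produces a lower bound only through one-dimensional representations. My plan is to reduce it to the previous case using the state-positivity constraints \eqref{HKP}: adding $1 + g_2 + g_3 + g_4 + 2g_\X \geq 0$ and $1 + g_2 + g_3 + g_4 - 2g_\X \geq 0$ gives $1 + g_2 + g_3 + g_4 \geq 2\modu{g_\X} = 4$, and combined with $g_v \leq \modu{g_v} \leq 1$ from \eqref{CsqKP} this forces $g_2 = g_3 = g_4 = 1$. Hence the one-dimensional lower bound is already triggered and rules out convergence, closing the equivalence.
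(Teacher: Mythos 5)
Your proposal is correct and follows essentially the same route as the paper: the upper bound of Lemma~\ref{LemBd} for sufficiency and the exponential decay, and the one-dimensional lower bound together with the constraint $1+g_2+g_3+g_4 \pm 2g_\X \geq 0$ from \eqref{HKP} to handle $\X$ (the paper phrases this direction directly rather than by contrapositive, but the content is identical). One small wording fix: you do not literally \emph{add} the two inequalities (that would only give $1+g_2+g_3+g_4\geq 0$); rather, each of them separately gives $1+g_2+g_3+g_4\geq \mp 2g_\X$, and together they yield $1+g_2+g_3+g_4\geq 2\modu{g_\X}$, which is the bound you actually use.
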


\begin{proof}
By the upper bound \eqref{KPBd} in Lemma \ref{LemBd}, the condition \eqref{KPcv} is sufficient.

By the lower bound \eqref{KPBd} in Lemma \ref{LemBd}, the convergence implies that $\modu{g_u}$ is strictly less than $1$, for each $u \in \{2, 3, 4\}$. Then, the second equation in \eqref{HKP} leads to $\modu{g_\X} < 2$, i.e. \eqref{KPcv} holds.

In the case of convergence, let us denote by $g_m$ the greatest element among $\modu{g_2}$, $\modu{g_3}$, $\modu{g_4}$ and $\frac{\modu{g_\X}}{2}$. Then, $g_m < 1$ and, by Lemma \ref{LemBd},
\[\norm{\Fo(g)^{\star k} - \hw\,}_{QTV} \leq \sqrt{\frac{1}{4} \sum_{u = 2}^4 \modu{g_u}^{2k} + \frac{\modu{g_\X}^{2k}}{4^{k}}} \leq \frac{\sqrt{7}}{2} g_m^k\]
which shows the exponential decay. \qed
\end{proof}

By the inequality \eqref{CsqKP}, the random walk does not converge to the Haar state if and only if there exists a non-trivial irreducible representation $\alpha$ such that $\modu{g_\alpha} = d_\alpha$, i.e.\ $g_\alpha \in \left\{-d_\alpha, d_\alpha\right\}$. Let us note that it still might converge, but to some other state. Moreover, the limit state, if it exists, is clearly an idempotent state.

Let us now investigate conditions for the random walk to converge to an idempotent state different from $\hw\,\,$. First of all, let us list the idempotent states on $\KP$, given by Pal\cite{pal}, i.e. states on $\KP$ satisfying $(\phi \otimes \phi)\circ\Delta =\colon \phi \star \phi = \phi$:
\begin{align*}
\phi_1 &= 8 \Fo(e_1)\\
\phi_2 &= 4 \Fo(e_1 + e_2)\\
\phi_3 &= 4 \Fo(e_1 + e_3)\\
\phi_4 &= 4 \Fo(e_1 + e_4)\\
\phi_5 &= 2 \Fo(e_1 + e_2 + e_3 + e_4)\\
\phi_6 &= 2 \Fo(e_1 + e_4 + E_{11})\\
\phi_7 &= 2 \Fo(e_1 + e_4 + E_{22})\\
\phi_8 &= \Fo(\indic_\KP) = \hw
\end{align*}
Then, by looking at the different cases for $g$ to satisfy \eqref{HKP} and not \eqref{KPcv}, we get:
\begin{proposition}
Assume that the random walk associated to $g$ does not converge to the Haar state $\phi_8$. Then, we have six possibilities.

If $g_\X = 2$, then $g_u = 1$ for all $1 \leq u \leq 4$ and $\Fo(g) = \phi_ 1$.

If $g_\alpha = 1$ for at least two non trivial one-dimensional representations in $I(\KP)$ and $\modu{g_\X} < 2$, then $g_u = 1$ for all $1 \leq u \leq 4$ and the random walk converges to $\phi_4$.

If $g_2 = 1$, then $g_3 = g_4$. Assume also that $\modu{g_4} < 1$ and $\modu{g_\X} < 2$, then the random walk converges to $\phi_6$.

If $g_3 = 1$, then $g_2 = g_4$. Assume also that $\modu{g_2} < 1$ and $\modu{g_\X} < 2$, then the random walk converges to $\phi_7$.

If $g_4 = 1$, then $g_2 = g_3$. Assume also that $\modu{g_3} < 1$ and $\modu{g_\X} < 2$, then the random walk converges to $\phi_5$.

In all the other cases, it means if $g_\X = -2$ or if there is $2 \leq u \leq 4$ such that $g_u = -1$, then the random walk is cyclic and does no converge.
\end{proposition}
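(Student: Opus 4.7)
The plan is to obtain a closed form for $g^{\star k}$ inside the central algebra, carry out a case analysis driven by \eqref{HKP}, and then identify each possible limit state with one of the $\phi_i$ by a direct computation in $C(\KP)$.

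For the first step, one checks the convolution rules on characters. Using the orthogonality relations $\hw \beta_{ij} \chi_\alpha = \delta_{\alpha,\bar\beta}/d_\alpha$ (as already invoked in the proof of Lemma \ref{LemBd}) together with the Sweedler formula $a \star b = \sum_{(b)} b_{(2)} \Fo(a)(S(b_{(1)}))$, one obtains
\[ \rho(u) \star \rho(v) = \delta_{u,v}\,\rho(u), \quad \rho(u) \star \chi(\X) = \chi(\X) \star \rho(u) = 0, \quad \chi(\X) \star \chi(\X) = \tfrac{1}{2}\,\chi(\X). \]
An induction then gives the closed form
\[ g^{\star k} = \sum_{u=1}^{4} g_u^{k}\,\rho(u) + 2\left(\tfrac{g_{\X}}{2}\right)^{k} \chi(\X), \]
so that the asymptotic behavior of $\Fo(g)^{\star k} = \Fo(g^{\star k})$ is entirely controlled by the scalar sequences $g_u^{k}$ and $(g_\X/2)^{k}$.

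Since the walk does not converge to the Haar state, the preceding theorem (together with \eqref{CsqKP}) forces some non-trivial coefficient to be extremal, namely $g_\X \in \{\pm 2\}$ or $g_u \in \{\pm 1\}$ for some $u \in \{2,3,4\}$. The next step is to enumerate these configurations and use \eqref{HKP} to propagate the constraints. For $g_\X = 2$, the line $1+g_2+g_3+g_4-2g_\X \geq 0$ combined with $|g_u| \leq 1$ forces $g_2 = g_3 = g_4 = 1$, and substituting back yields $\Fo(g) = \phi_1$ from the very first step. When $g_u = 1$ holds for a single $u \in \{2,3,4\}$, the last three lines of \eqref{HKP} force equality of the two remaining one-dimensional coefficients; and $g_u = g_v = 1$ for two of them forces the third to equal $+1$ by the same inequalities.

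In each positive subcase, under the additional assumptions $|g_\X| < 2$ and $|g_v| < 1$ for all other $v$, the closed form shows that $g^{\star k}$ converges to $\sum_{u : g_u = 1} \rho(u)$. Identifying the Fourier transform of the limit with one of Pal's $\phi_i$ reduces to the elementary computations $\rho(1)+\rho(2) = 2(e_1+e_4+E_{11})$, $\rho(1)+\rho(3) = 2(e_1+e_4+E_{22})$, $\rho(1)+\rho(4) = 2(e_1+e_2+e_3+e_4)$ and $\rho(1)+\rho(2)+\rho(3)+\rho(4) = 4(e_1+e_4)$, which give the limits $\phi_6, \phi_7, \phi_5, \phi_4$ respectively. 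In the remaining extremal configurations---$g_u = -1$ for some $u \in \{2,3,4\}$, or $g_\X = -2$---the corresponding scalar $g_u^k$ or $(g_\X/2)^k$ alternates in sign while keeping modulus $1$, so the offending coefficient in $g^{\star k}$ is $2$-periodic and the walk fails to converge. The main effort in the proof is therefore purely bookkeeping: verifying that the five positive subcases together with this final negative regime exhaust all situations compatible with \eqref{HKP} yet incompatible with \eqref{KPcv}, while the algebraic identifications in $C(\KP)$ are routine.
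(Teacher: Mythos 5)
Your proposal is correct and follows essentially the same route as the paper: both rest on the closed form $g^{\star k}=\sum_\alpha g_\alpha^k d_\alpha^{1-k}\chi_\alpha$ in the central algebra, reduce convergence of the states to convergence of the coefficients of $g^{\star k}$ in the canonical basis, and then run the same case analysis on \eqref{HKP} combined with \eqref{CsqKP}. You merely make explicit two points the paper leaves implicit (the convolution rules $\chi_\alpha\star\chi_\beta=\delta_{\alpha\beta}d_\alpha^{-1}\chi_\alpha$ and the identifications $\rho(1)+\rho(2)=2(e_1+e_4+E_{11})$, etc.), and your exhaustive case analysis correctly yields, as a by-product, the paper's observation that $\phi_2$ and $\phi_3$ can never occur as limits.
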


\begin{proof}
First, let us note that, by \cite[Proposition 2.1]{cutOffJP},
\[\norm{\Fo(g) - \Fo(x)}_{QTV} = \frac{1}{2} \norm{g - x}_1 = \frac{1}{2} \hw \modu{g - x} \text{ .}\]
We can deduce that the random walk associated to $g$ converges to $\phi_i$ if and only if the coefficients in front of $e_1$, $e_2$, $e_3$, $e_4$, $E_{11}$ and $E_{22}$ of $g^{\star k}$ converge to the corresponding ones in the definition of $\phi_i$ above.

Let us also note that $g^{\star k} = \displaystyle\sum_{\alpha \in I(\KP)} \frac{g_\alpha^k}{d_\alpha^{k-1}} \chi_\alpha$. In particular, the coefficients in front of $e_2$ and $e_3$ in $g^{\star k}$ are equal. Hence the random walk defined by such a $g$ cannot converge to $\phi_2$ or $\phi_3$.

Let us fix $g$ satisfying \eqref{HKP} and not \eqref{KPcv}. Assume, for instance $g_\X = 2$. Then, the inequality $1 + g_2 + g_3 + g_4 - 2 g_{\X} \geq 0$ in \eqref{HKP} implies that $g_u \geq 1$ for all $1 \leq u \leq 4$. Thus, by \eqref{CsqKP}, $g_u = 1$ for all $1 \leq u \leq 4$ and then $\Fo(g) = \phi_ 1$. Likewise, if $g_\X = -2$, $g_u = 1$ for all $1 \leq u \leq 4$ and $g^{\star k} = 8e_4$ or $8e_1$, depending on the parity of $k$. The other cases with $\modu{g_\X} < 2$ follow from a similar reasoning.\qed
\end{proof}

\section{Random walks in $\KP_n$}

We use the definition and the representation theory of the Sekine family of finite quantum groups presented in \cite{arXiv1}. Let us recall that for each $n$ greater than or equal to $2$, $C(\KP_n) = \bigoplus\limits_{i,j \in \Zn} \CC e_{(i,j)} \oplus M_n(\CC)$ is endowed with the Haar state $\hwn$ given by
\[\hw \left(\sum_{i,j \in \Zn} x_{(i,j)}e_{(i,j)} + \hspace{-5pt}\sum_{1 \leq i,j \leq n} X_{i,j} E_{i,j} \right) = \frac{1}{2n^2} \left(\sum_{i,j \in \Zn} x_{(i,j)} + n \sum_{i = 1}^n X_{i,i}\right)\]
where $E_{i,j}$ denotes a choice of matrix units in $M_n(\CC)$ and $x_{(i,j)}, X_{i,j} \in \CC$.

The set of irreducible representations $I(\KP_n)$ depends on the parity of $n$. For $n$ odd, its elements are, for $l \in \Zn$, the one-dimensional representations
\[\rho_l^\pm = \sum_{i, j \in \Zn} \eta^{il} e_{(i,j)} \pm \sum_{m = 1}^n E_{m, m+l}\]
with $\eta = e^{\frac{2 \imath \pi}{n}}$ a primitive $n^{\text{th}}$ root of unity, and the two-dimensional representations $\Xuv{u}{v}$, whose characters are
\[\chi\left(\Xuv{u}{v}\right) = 2 \sum_{s,t \in \Zn} \eta^{su} \cos\left(\frac{2tv\pi}{n}\right) e_{(s,t)}\]
for $u \in \Zn$ and $v \in \{1, 2, \ldots, \left\lfloor \frac{n-1}{2}\right\rfloor\}$.
If $n$ is even, we also need to add, for each $l \in \Zn$,
\[\sigma_l^\pm = \sum_{i, j \in \Zn} (-1)^j\eta^{il} e_{(i,j)} \pm \sum_{m = 1}^n (-1)^mE_{m, m+l} \text{ .}\]
Note that $\rho_0^+$ is the unit in $C(\KP_n)$, for each $n$.

Let us fix an element $a =\hspace{-5pt}\sum\limits_{\alpha \in I(\KP_n)}\hspace{-5pt}a_\alpha \chi_\alpha$ of $C(\KP_n)$. Let us denote $a_{(i,j)}$ and $A_{ij}$ its coefficients in the canonical basis. Then, with $\indic_{2\NN}$ the indicator function of even integers,
\[a_{(i,j)} = \sum_{l = 0}^{n-1}\hspace{-4pt}\left(\hspace{-2pt}a_{\rho_l^+}\hspace{-2pt}+ a_{\rho_l^-}\hspace{-2pt}+\hspace{-2pt}\indic_{2\NN}(n) (-1)^j(a_{\sigma_l^+}\hspace{-2pt}+ a_{\sigma_l^-})\hspace{-2pt}+ 2\hspace{-5pt}\sum_{v = 1}^{\lfloor \frac{n-1}{2} \rfloor}\hspace{-5pt}a_{\Xuv{l}{v}} \cos\left(\frac{2 \pi j v}{n} \right)\hspace{-4pt}\right) \eta^{il}\]
and $A_{i,j}$ is given by $a_{\rho_{j-i}^+} - a_{\rho_{j-i}^-} + \indic_{2\NN}(n) (-1)^i(a_{\sigma_{j-i}^+} - a_{\sigma_{j-i}^-})$.

\begin{lemma}
\label{HaPos}
The functional $\Fo(a)$ is a state if and only if $a_{\rho_0^+} = 1$, for all $i, j$ in $\Zn$, $
a_{(i,j)} \geq 0$ and the matrix $A = (A_{ij})_{1 \leq i,j \leq n}$ is positive semi-definite
\end{lemma}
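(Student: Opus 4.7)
The plan is to exploit the multi-matrix decomposition $C(\KP_n) = \bigoplus_{i,j \in \Zn} \CC e_{(i,j)} \oplus M_n(\CC)$, which forces every positive element to decompose blockwise as a nonnegative scalar on each $\CC e_{(i,j)}$ plus a positive semi-definite matrix on the $M_n(\CC)$ block. A linear functional is then positive if and only if it is nonnegative on each such elementary positive element. So the strategy is to translate the two defining conditions of a state---unitality and positivity---into conditions on the basis coefficients $a_{(i,j)}$ and $A_{ij}$ of $a$, exactly as was done for $\KP$ just above.

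The first step is to compute $\Fo(a)$ on the canonical basis directly from $\Fo(x)(y) = \hwn yx$. Since the Haar state has no cross term between the abelian part and the $M_n$-block, a short calculation analogous to the one performed for $\KP$ gives $\Fo(a)(e_{(i,j)}) = \frac{a_{(i,j)}}{2n^2}$ and $\Fo(a)(E_{ij}) = \frac{A_{ji}}{2n}$. Unitality $\Fo(a)(\indic) = 1$ is then handled via characters: since $\KP_n$ is of Kac type, the orthogonality relations for characters of irreducible representations yield $\hwn \chi_\alpha = \delta_{\alpha,\rho_0^+}$ (using that $\chi_{\rho_0^+} = \rho_0^+ = \indic$ is the trivial character), whence $\Fo(a)(\indic) = \hwn a = a_{\rho_0^+}$.

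For positivity, I would take an arbitrary positive element $\sum x_{(i,j)} e_{(i,j)} + \sum X_{ij} E_{ij}$ with $x_{(i,j)} \geq 0$ and $X = (X_{ij})$ positive semi-definite, and plug it into $\Fo(a)$ using the values just computed. The abelian block contributes $\frac{1}{2n^2} \sum x_{(i,j)} a_{(i,j)}$, which is nonnegative for every choice of nonnegative $x_{(i,j)}$ if and only if each $a_{(i,j)} \geq 0$. The matrix block contributes $\frac{1}{2n} \Tr(XA)$; this is nonnegative for every positive semi-definite $X$ if and only if $A$ itself is positive semi-definite, the "only if" direction being obtained by specializing to rank-one elements $X = v v^*$ and reading off $v^* A v \geq 0$. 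Combined with the unitality step, this gives the claimed equivalence.

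There is no serious obstacle here: the result ultimately rests on the orthogonality relations for irreducible characters in a Kac-type quantum group and the blockwise description of the positive cone of a multi-matrix algebra. The only thing to keep an eye on is the normalization coming from the Haar state---in particular the factor $n$ in front of $\sum X_{i,i}$---and the transpose in $\Fo(a)(E_{ij}) = \frac{A_{ji}}{2n}$, which is why the matrix part of the positivity criterion reads as $\Tr(XA)$ with $A$ left untransposed.
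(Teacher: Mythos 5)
Your argument is correct and follows essentially the same route as the paper: compute the pairing of $\Fo(a)$ with the canonical basis (the paper records exactly the values $\Fo(e_{(s,t)})(e_{(i,j)})$, $\Fo(E_{p,q})(E_{i,j})$, etc.) and then reduce to the blockwise description of the positive cone of the multi-matrix algebra. The only difference is that the paper outsources the ``$\Fo(a)$ is a state iff $a\geq 0$ and $\hwn a=1$'' step to Lemma 6.4 of Franz--Skalski, whereas you verify it directly; your computations (including the transpose in $\Fo(a)(E_{ij})=A_{ji}/2n$ and the identity $\hwn a=a_{\rho_0^+}$) check out.
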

 
\begin{proof}
We want $\Fo(a)$ to satisfy the first conditions of \cite[Lemma 6.4]{idem}. To obtain the result, we only need to note that
\[2n^2 \Fo(e_{(s,t)})(e_{(i,j)}) = \delta_{s,i} \delta_{t,j} \qquad 2n^2 \Fo(e_{(s,t)})(E_{i,j}) = 0\]
\[2n \Fo(E_{p,q})(e_{(i,j)}) = 0 \qquad 2n \Fo(E_{p,q})(E_{i,j}) = \delta_{p,j} \delta_{q,i} \text{ .}\]\qed
\end{proof}

\begin{remark}
By Lemma \ref{HaPos}, the coefficient $a_{\rho_0^-}$ is real and its absolute value is not greater than 1.
\end{remark}

\begin{example}
\label{ex1}
Let $p, q$ be two integers such that $pq = n$, $q > 1$. Put $a_{\rho_{lp}^+} = \eta^{lp}$ for all $0 \leq l \leq q-1$ and $a_\alpha = 0$ for all the others $\alpha \in I(\KP_n)$. Then, for all $i, j \in \Zn$
\[a_{(i,j)} = \sum_{l = 0}^{q-1} \eta^{lp} \eta^{ilp} = q \indic_{q\Zn}(i+1)\]
which equals $q$ if $i+1$ is a multiple of $q$ in $\Zn$ and $0$ otherwise. This is non negative, and the matrix $A$, defined by $A_{ij} = \eta^{lp}$ if $j = i + lp$ and $0$ otherwise, is self-adjoint and its eigenvalues are $0$ and $q$, i.e.\ $A$ is semidefinite positive. Therefore $\Fo(a) = \phi_p$ is a state on $C(\KP_n)$.
\end{example}

\subsection{Conditions for convergence}
First, note that, if we extend naturally the notation, $\Xuv{u}{0}$ is unitarily equivalent to the direct sum of the representations $\rho_u^+$ and $\rho_u^-$. Similarly, if $n$ is even, $\Xuv{u}{\frac{n}{2}}$ is unitarily equivalent to the direct sum of $\sigma_u^+$ and $\sigma_u^-$, as representations of $\KP_n$.

Moreover, the random walk associated to $a$ converges if and only if the sequences of matrices $\left( \widehat{\Fo(a)}(\Xuv{u}{v})^k \right)_{k \in \NN}$ converge for all $u \in \Zn$ and all $v \in \{0, 1, \ldots, \left\lfloor \frac{n}{2} \right\rfloor\}$, since
\[\widehat{(\Fo(a)^{\star k})} (\Xuv{u}{v}) = \left(\widehat{\Fo(a)}(\Xuv{u}{v})\right)^k \text{ .}\]

\begin{theorem}
\label{CondCv}
The random walk associated to $a$ converges if and only if every complex number $\frac{a_\alpha}{d_\alpha}$ is an element of the open unit disc $\mathbb{D}$ or equals $1$, for all $\alpha \in I(\KP_n)$. 
\end{theorem}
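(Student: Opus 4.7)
The plan is to reduce the convergence of the sequence $\Fo(a)^{\star k}$ to the convergence of scalar sequences $z^k$ by computing the Fourier transform on each irreducible block. By the observation just before the theorem, the random walk converges if and only if, for every $u \in \Zn$ and every $v \in \{0, 1, \ldots, \lfloor n/2 \rfloor\}$, the matrix sequence $\bigl(\widehat{\Fo(a)}(\Xuv{u}{v})^k\bigr)_{k \in \NN}$ converges, so my first task would be to compute these Fourier matrices explicitly.

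To do this I would mimic the calculation from the proof of Lemma \ref{LemBd}. Because $\KP_n$ is of Kac type, its Haar state is tracial, and combining this with the orthogonality relations for matrix coefficients gives, for any two irreducibles $\alpha,\beta$,
\[\hwn \beta_{ij} \chi_\alpha = \frac{\delta_{\alpha,\bar\beta}}{d_\alpha}\,\delta_{ij} \text{ ,}\]
hence $\widehat{\Fo(a)}(\beta) = \frac{a_{\bar\beta}}{d_\beta}\, I_{d_\beta}$ by linearity of $\Fo$. When $v=0$, or $v=n/2$ with $n$ even, $\Xuv{u}{v}$ is not irreducible but decomposes unitarily as $\rho_u^+ \oplus \rho_u^-$ (respectively $\sigma_u^+ \oplus \sigma_u^-$) according to the note at the start of the subsection, and applying the same computation blockwise shows that $\widehat{\Fo(a)}(\Xuv{u}{v})$ is conjugate to a diagonal $2 \times 2$ matrix whose two entries are again of the form $\frac{a_{\bar\alpha}}{d_\alpha}$ for the two one-dimensional irreducibles arising in the decomposition.

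In every case, $\widehat{\Fo(a)}(\Xuv{u}{v})^k$ is therefore conjugate to a diagonal matrix whose entries are $k$-th powers of complex numbers of the form $z = \frac{a_{\bar\alpha}}{d_\alpha}$, so the last step would be the elementary observation that a geometric sequence $(z^k)_{k \in \NN}$ converges exactly when $z \in \uD$ (limit $0$) or $z = 1$ (constant sequence), while any $z$ on the unit circle with $z \neq 1$ yields a non-convergent sequence. Combining this over all irreducibles and using that $\alpha \mapsto \bar\alpha$ is a bijection of $I(\KP_n)$ would yield the stated equivalence. The main subtlety I would watch for is the handling of the reducible values $v \in \{0, n/2\}$: I must make sure that the unitary equivalence actually block-diagonalises the Fourier matrix, so that convergence of its powers really decouples into scalar convergences and no Jordan-block pathology can arise; this is safe because the summands there are pairwise inequivalent one-dimensional representations, which forces the block structure to be inherited.
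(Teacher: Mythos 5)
Your proposal is correct and follows essentially the same route as the paper: reduce to convergence of the powers of the Fourier matrices $\widehat{\Fo(a)}(\Xuv{u}{v})$, show each is unitarily conjugate to a diagonal matrix with entries $\frac{a_{\bar\alpha}}{d_\alpha}$ (a scalar multiple of $I_2$ for the genuinely irreducible two-dimensional blocks, and $\mathrm{diag}(a_{\alpha^+},a_{\alpha^-})$ for $v=0$ and $v=\frac{n}{2}$), and apply the elementary criterion for convergence of $z^k$. The only cosmetic difference is that the paper obtains these matrices by direct computation with the coefficients $a_{(i,j)}$ and $A_{i,j}$, whereas you invoke the orthogonality relations and the naturality of $\beta\mapsto\widehat{\Fo(a)}(\beta)$ under the unitary equivalence $\Xuv{u}{0}\cong\rho_u^+\oplus\rho_u^-$; your diagonalizability remark correctly rules out any Jordan-block issue.
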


\begin{proof}
First, let us fix $(u,v)$ such that $0 \leq u \leq n-1$ and $1 \leq v \leq \left \lfloor \frac{n-1}{2} \right \rfloor$ . Then
\[\widehat{\Fo(a)}(\Xuv{u}{v}) = \begin{pmatrix} \sum\limits_{s,t = 0} ^{n-1} \eta^{su+tv} \hwn e_{(s,t)} a & \sum\limits_{m = 1} ^{n} \eta^{mv} \hwn E_{m, m+u} a\\ \sum\limits_{m = 1} ^{n} \eta^{-mv} \hwn E_{m, m+u} a &  \sum\limits_{s,t = 0} ^{n-1} \eta^{su-tv} \hwn e_{(s,t)} a\end{pmatrix}\]
is easy to calculate if we note that $\hwn e_{(i,j)} a = \frac{a_{(i,j)}}{2n^2}$, and $\hwn E_{m, m+u} a = \frac{1}{2n} A_{m+u,m}$. By direct calculations, we obtain $\widehat{\Fo(a)}(\Xuv{u}{v}) = \frac{1}{2} a_{\Xuv{n-u}{v}} I_2$.
Hence the corresponding sequences of matrices converge if and only if
\[\frac{1}{2}\modu{a_{\Xuv{n-u}{v}}} < 1 \text{ or } \frac{1}{2} a_{\Xuv{n-u}{v}} = 1 \text{ .}\]

The same computation for $v = 0$ gives
\begin{align*}
\widehat{\Fo(a)}(\Xuv{u}{0}) &= \frac{1}{2}\begin{pmatrix}a_{\rho_{n-u}^+} + a_{\rho_{n-u}^-} & a_{\rho_{n-u}^+} - a_{\rho_{n-u}^-} \\ a_{\rho_{n-u}^+} - a_{\rho_{n-u}^-} & a_{\rho_{n-u}^+} + a_{\rho_{n-u}^-}\end{pmatrix}\\
&= \begin{pmatrix}\frac{1}{\sqrt{2}}&\frac{1}{\sqrt{2}}\\\frac{1}{\sqrt{2}}&-\frac{1}{\sqrt{2}}\end{pmatrix} \begin{pmatrix}a_{\rho_{n-u}^+}&0\\0&a_{\rho_{n-u}^-}\end{pmatrix}\begin{pmatrix}\frac{1}{\sqrt{2}}&\frac{1}{\sqrt{2}}\\\frac{1}{\sqrt{2}}&-\frac{1}{\sqrt{2}}\end{pmatrix} \text{ .}
\end{align*}
This leads to the condition on $a_{\rho_{l}^+}$ and $a_{\rho_{l}^-}$.

If $n$ is even, we get
\[\widehat{\Fo(a)}(\Xuv{u}{\frac{n}{2}}) = \frac{1}{2}\begin{pmatrix}a_{\sigma_{n-u}^+} + a_{\sigma_{n-u}^-} & (-1)^u\left(a_{\sigma_{n-u}^+} - a_{\sigma_{n-u}^-}\right) \\ (-1)^u\left(a_{\sigma_{n-u}^+} - a_{\sigma_{n-u}^-}\right) & a_{\sigma_{n-u}^+} + a_{\sigma_{n-u}^-}\end{pmatrix}\]
whose eigenvalues are $a_{\sigma_{n-u}^+}$ and $a_{\sigma_{n-u}^-}$, and the condition on these coefficients follows. \qed
\end{proof}

\begin{remark}
\label{rkMod}
Since the $\Xuv{u}{v}$'s are unitaries, the $\widehat{\Fo(a)}(\Xuv{u}{v})$'s have norm not greater than $1$, when $\Fo(a)$ is a state. In particular, for each irreducible representation $\alpha$ in $I(\KP_n)$, the dimension bounds the modulus of the corresponding coefficient, i.e.\ $\modu{a_\alpha} \leq d_\alpha$.
\end{remark}

\subsection{Distance to uniformity}

Now that we have a criterion for convergence, we want to single out conditions ensuring that the limit is the Haar state. As in the case of the Kac-Paljutkin finite quantum group, we look at convergence to the Haar state by using Quantum Diaconis-Shahshahani Upper and Lower Bound Lemmas.

\subsubsection{Upper bound}
\begin{lemma}
For any natural integer $k$,
\begin{equation}
\norm{\Fo(a)^{\star k} - \hwn\,}_{QTV} \leq \sqrt{\frac{1}{4}\hspace{-5pt}\sum_{\substack{\alpha \in I(\KP_n)\\ \alpha \neq \rho_0^+,\,d_\alpha = 1}}\hspace{-13pt}\modu{a_\alpha}^{2k} + \frac{1}{4^k}\sum_{\substack{\alpha \in I(\KP_n)\\d_\alpha = 2}} \hspace{-5pt}\modu{a_{\Xuv{u}{v}}}^{2k}}
\label{UpBd}
\end{equation}
\end{lemma}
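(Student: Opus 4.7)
The approach will be to apply the Quantum Diaconis--Shahshahani Upper Bound Lemma \cite[Lemma 5.3.8]{McCarthy}, exactly as in the proof of Lemma \ref{LemBd}, and to reuse the Fourier-matrix computations already carried out in the proof of Theorem \ref{CondCv}. The starting point is
$$\norm{\Fo(a)^{\star k} - \hwn\,}_{QTV}^2 \leq \frac{1}{4} \sum_{\substack{\beta \in I(\KP_n)\\ \beta \neq \rho_0^+}} d_\beta \Tr\bigl(\widehat{\Fo(a)}(\beta)^{*k}\widehat{\Fo(a)}(\beta)^k\bigr),$$
so it suffices to evaluate the summand for every non-trivial irreducible representation and then regroup.

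For a one-dimensional $\beta$, the orthogonality relations for coefficients of irreducible representations (Kac type, as already invoked in Lemma \ref{LemBd}) give $\widehat{\Fo(a)}(\beta) = a_{\bar\beta}$, where $\bar\beta$ is the contragredient representation. A direct inspection of the formulas for $\rho_l^\pm$ (and for $\sigma_l^\pm$ when $n$ is even) shows that $\bar\beta$ is again one-dimensional and that $\beta \mapsto \bar\beta$ is an involutive bijection on the set of non-trivial one-dimensional irreducibles, via the identities $(\rho_l^\pm)^{*} = \rho_{n-l}^\pm$ and $(\sigma_l^\pm)^{*} = \sigma_{n-l}^\pm$. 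Consequently $\Tr(\widehat{\Fo(a)}(\beta)^{*k}\widehat{\Fo(a)}(\beta)^k) = \modu{a_{\bar\beta}}^{2k}$ and, after reindexing, the one-dimensional part of the sum contributes exactly $\tfrac{1}{4}\sum_{\alpha\neq\rho_0^+,\,d_\alpha=1}\modu{a_\alpha}^{2k}$.

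For two-dimensional $\beta = \Xuv{u}{v}$ with $1 \le v \le \lfloor(n-1)/2\rfloor$, the calculation in the proof of Theorem \ref{CondCv} already gives $\widehat{\Fo(a)}(\Xuv{u}{v}) = \tfrac{1}{2}\,a_{\Xuv{n-u}{v}}\,I_2$, so
$$\Tr\bigl(\widehat{\Fo(a)}(\Xuv{u}{v})^{*k}\widehat{\Fo(a)}(\Xuv{u}{v})^k\bigr) = \frac{\modu{a_{\Xuv{n-u}{v}}}^{2k}}{2^{2k-1}}.$$
The corresponding prefactor $\tfrac{1}{4}\cdot 2\cdot 2^{1-2k}$ equals $4^{-k}$, and the bijection $u \mapsto n-u$ on $\Zn$ lets me reindex the sum into $4^{-k}\sum_{d_\alpha=2}\modu{a_\alpha}^{2k}$. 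Summing the one- and two-dimensional contributions and taking square roots then yields \eqref{UpBd}.

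I do not anticipate any real obstacle: the computations are routine and parallel to those of Lemma \ref{LemBd} and Theorem \ref{CondCv}. The only items requiring minor care are the explicit verification that $\beta\mapsto\bar\beta$ is an involution among one-dimensional irreducibles (so that the reindexing is legitimate) and the bookkeeping of the prefactors $\tfrac14 d_\beta$ versus the $2^{1-2k}$ coming from the trace of a scalar $2\times 2$ matrix.
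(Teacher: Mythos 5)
Your proposal is correct and follows essentially the same route as the paper: apply the quantum Diaconis--Shahshahani upper bound lemma, evaluate $\widehat{\Fo(a)}(\beta)$ via the orthogonality relations in the one-dimensional case and via the computation already done for Theorem \ref{CondCv} in the two-dimensional case, and regroup. Your extra care with the reindexing (the involution $\beta\mapsto\bar\beta$ and $u\mapsto n-u$) is harmless bookkeeping that the paper leaves implicit; the only micro-inaccuracy is that for odd $l$ one has $(\sigma_l^\pm)^*=\sigma_{n-l}^\mp$ rather than $\sigma_{n-l}^\pm$, which still gives a bijection of the non-trivial one-dimensional irreducibles and so does not affect the sum.
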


\begin{proof}
The Quantum Diaconis-Shahshahani Upper Bound Lemma \cite[Lemma 5.3.8]{McCarthy} gives
\[\norm{\Fo(a)^{\star k} - \hwn\,}_{QTV}^2 \leq \frac{1}{4} \sum_{\substack{\beta \in I(\KP_n)\\ \beta \neq \rho_0^+}} d_\beta \Tr\left(\widehat{\Fo(a)}(\beta)^{*k}\widehat{\Fo(a)}(\beta)^k\right) \text{ .}\]

Let us compute $\widehat{\Fo(a)}(\beta)$ for every $\beta \in I(\KP_n)$. First of all, let us recall that, by linearity of $\Fo$,
\[\widehat{\Fo(a)}(\beta) = \sum_{\alpha \in I(\KP_n)}\hspace{-8pt}a_\alpha \widehat{\Fo(\chi_\alpha)}(\beta) = \left(\sum_{\alpha \in I(\KP_n)}\hspace{-8pt}a_\alpha \hwn \beta_{ij} \chi_\alpha \right)_{1 \leq i,j \leq d_\beta} \text{ .}\]

Assume $d_\beta = 1$. By the orthogonality relations for coefficients of irreducible representations of quantum group of Kac type, we get $\widehat{\Fo(a)}(\beta) = a_{\beta^{-1}}$ and then
\begin{equation}
\label{eq3}
\Tr\left(\widehat{\Fo(a)}(\beta)^{*k}\widehat{\Fo(a)}(\beta)^k\right) = \modu{a_{\beta^{-1}}}^{2k} \text{ .}
\end{equation}

Now, assume $d_\beta = 2$, which means that there exists $(u,v)$ such that $\beta = \Xuv{u}{v}$, $0 \leq u \leq n-1$ and $1 \leq v \leq \left \lfloor \frac{n-1}{2} \right \rfloor$. We already know that $\widehat{\Fo(a)}(\Xuv{u}{v}) = \frac{1}{2} a_{\Xuv{n-u}{v}} I_2$. Therefore
\begin{equation}
\label{eq4}
\Tr\left(\widehat{\Fo(a)}(\Xuv{u}{v})^{*k}\widehat{\Fo(a)}(\Xuv{u}{v})^k\right) = \frac{1}{2^{2k-1}} \modu{a_{\Xuv{n-u}{v}}}^{2k} \text{ .}
\end{equation}

Finally, we get the upper bound \eqref{UpBd} by replacing \eqref{eq3} and \eqref{eq4} in the Quantum Diaconis-Shahshahani Upper Bound Lemma. \qed
\end{proof}

Hence, the random walk associated to $a$ converges to the Haar state if all the complex numbers cited in Theorem \ref{CondCv} are not equal to $1$, except $a_{\rho_0^+}$, which means that
\begin{equation}
\label{Hcv0}
\forall \alpha \in I(\KP_n)\setminus \{\rho_0^+\}, \ \modu{a_\alpha} < d_\alpha
\end{equation} 

\subsubsection{Lower bound}
\begin{lemma}
For any non trivial irreducible representation $\alpha$, 
\begin{equation}
\frac{1}{2} \modu{\frac{a_\alpha}{d_\alpha}}^k \leq \norm{\Fo(a)^{\star k} - \hwn\,}_{QTV} \text{ .}
\label{LoBd}
\end{equation}
\end{lemma}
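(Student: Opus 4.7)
The plan is to follow the lower-bound strategy of Lemma~\ref{LemBd}, adapted to handle uniformly one- and two-dimensional irreducible representations. First I would observe that by the orthogonality relations for matrix coefficients on the Kac-type quantum group $\KP_n$, $\hwn(\beta_{ij}\chi_\gamma) = \delta_{\gamma,\bar\beta}\delta_{ij}/d_\beta$, so that the Fourier transform of $\Fo(a)$ at every $\beta\in I(\KP_n)$ is a scalar multiple of the identity:
\[\widehat{\Fo(a)}(\beta) = \frac{a_{\bar\beta}}{d_\beta}\, I_{d_\beta}\,.\]
This simultaneously recovers the formula $\widehat{\Fo(a)}(\Xuv{u}{v}) = \frac{1}{2} a_{\Xuv{n-u}{v}} I_2$ from the proof of Theorem~\ref{CondCv} and the analogous one-dimensional identity. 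Raising to the $k$-th power gives $\widehat{\Fo(a)^{\star k}}(\alpha) = (a_{\bar\alpha}/d_\alpha)^k\, I_{d_\alpha}$, so in particular $\Fo(a)^{\star k}(\alpha_{11}) = (a_{\bar\alpha}/d_\alpha)^k$ while $\hwn(\alpha_{11}) = 0$. Since $\Fo(a)$ is a state and $\chi_{\bar\alpha} = \chi_\alpha^*$, one has $a_{\bar\alpha} = \overline{a_\alpha}$ and hence $\modu{a_{\bar\alpha}} = \modu{a_\alpha}$.

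Next I would build a self-adjoint test element. Pick $c\in\CC$ with $\modu{c}=1$ such that $c\,(a_{\bar\alpha}/d_\alpha)^k = \modu{a_\alpha/d_\alpha}^k$, and set
\[x = \tfrac{1}{2}\bigl(c\,\alpha_{11} + \bar c\,\alpha_{11}^{*}\bigr)\,.\]
Because $\alpha_{11}$ is a matrix coefficient of a unitary representation, the identity $\sum_i \alpha_{i1}^{*}\alpha_{i1} = 1$ forces $\norm{\alpha_{11}}_\infty\leq 1$, and hence $\norm{x}_\infty\leq 1$. A direct computation gives $(\Fo(a)^{\star k} - \hwn)(x) = \modu{a_\alpha/d_\alpha}^k$, and combined with the well-known duality
\[\norm{\phi-\psi}_{QTV} = \tfrac{1}{2}\sup\bigl\{\modu{\phi(y)-\psi(y)} : y = y^{*},\ \norm{y}_\infty\leq 1\bigr\}\,,\]
this yields \eqref{LoBd} uniformly for all non-trivial $\alpha$, be it one- or two-dimensional. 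Equivalently, one can cite the general form of McCarthy's Quantum Diaconis-Shahshahani Lower Bound Lemma \cite[Lemma~5.3.9]{McCarthy}, which covers higher-dimensional representations and gives the same conclusion immediately.

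The hard part, modest as it is, lies precisely at the bridge between the 1D bound used in Lemma~\ref{LemBd} and the 2D case: the factor $\frac{1}{2}$ must be preserved. The scalar-multiple-of-identity shape of $\widehat{\Fo(a)}(\alpha)$ collapses the matrix problem to a scalar one at a single entry $\alpha_{11}$, and the phase rotation by $c$ converts the complex number $(a_{\bar\alpha}/d_\alpha)^k$ into the real modulus $\modu{a_\alpha/d_\alpha}^k$ without introducing the spurious $\sqrt{2}$ that a naive real/imaginary-part split would produce.
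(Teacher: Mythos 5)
Your proof is correct and follows essentially the same route as the paper: both rest on the duality formula \eqref{eqDefQTV} for the QTV norm together with the fact that, for $a$ in the central algebra, $\widehat{\Fo(a)}(\alpha)$ is the scalar matrix $\frac{a_{\bar\alpha}}{d_\alpha}I_{d_\alpha}$, so that evaluating $\Fo(a)^{\star k}-\hwn$ on a norm-one test element attached to $\alpha$ yields the bound. The only cosmetic difference is your choice of test element (a phase-rotated self-adjoint combination of the single coefficient $\alpha_{11}$, handling the one- and two-dimensional cases uniformly), where the paper instead cites McCarthy's lower bound lemma when $d_\alpha=1$ and tests against the normalized character $\chi(\Xuv{u}{v})/2$ when $d_\alpha=2$.
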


\begin{proof}
By the Quantum Diaconis-Shahshahani Lower Bound Lemma \cite[Lemma 5.3.9]{McCarthy}, for all non trivial one-dimensional representation $\alpha$,
\[\norm{\Fo(a)^{\star k} - \hwn\,}_{QTV} \geq \frac{1}{2} \modu{a_\alpha}^k \text{ .}\]
Moreover, the following equality \cite{McCarthy} holds
\begin{equation}
\norm{\Fo(a)^{\star k} - \hwn\,}_{QTV} = \frac{1}{2} \sup\limits_{x \in C(\KP_n), \norm{x}_\infty \leq 1} \modu{\Fo(a)^{\star k}(x) - \hwn x}
\label{eqDefQTV}
\end{equation}
and any two-dimensional irreducible representation satisfies $\norm{\frac{\chi(\Xuv{u}{v})}{2}}_\infty \leq 1$. Thus
\[\norm{\Fo(a)^{\star k} - \hwn\,}_{QTV} \geq \frac{1}{4} \modu{\Fo(a)^{\star k}\left(\chi(\Xuv{u}{v})\right)} = \frac{1}{2} \modu{\frac{a_{\Xuv{n-u}{v}}}{2}}^k \text{ .}\]
\qed
\end{proof}

Finally, combining the previous lemmas, we obtain
\begin{theorem}
The random walk $(\Fo(a)^{\star k})_{k \in \NN}$ converges to the Haar state if and only if the condition \eqref{Hcv0} holds.
\label{ThHaar}
\end{theorem}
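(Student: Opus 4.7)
The plan is to package together the two bounds already proven in the previous lemmas to obtain both implications.

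For the sufficiency (``if'' direction), I would assume that \eqref{Hcv0} holds, so that $\modu{a_\alpha} < d_\alpha$ for each non-trivial $\alpha \in I(\KP_n)$. Let $M$ denote the maximum of the finitely many values $\modu{a_\alpha}/d_\alpha$ over non-trivial $\alpha$; then $M < 1$. Plugging into the upper bound \eqref{UpBd} from the previous lemma, each term $\modu{a_\alpha}^{2k}$ with $d_\alpha = 1$ is at most $M^{2k}$ and each term $\modu{a_{\Xuv{u}{v}}}^{2k}/4^{k}$ is at most $M^{2k}$. Since $I(\KP_n)$ is finite, we obtain $\norm{\Fo(a)^{\star k} - \hwn}_{QTV} \leq C M^k$ for some constant $C$ depending only on $n$, which tends to $0$ (indeed exponentially fast).

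For the necessity (``only if'' direction), I would assume that $\Fo(a)^{\star k} \to \hwn$ in quantum total variation norm. Applied to any non-trivial irreducible $\alpha \in I(\KP_n)$, the lower bound \eqref{LoBd} of the preceding lemma says
\[\frac{1}{2}\modu{\frac{a_\alpha}{d_\alpha}}^k \leq \norm{\Fo(a)^{\star k} - \hwn}_{QTV} \xrightarrow[k \to \infty]{} 0 \text{ ,}\]
which forces $\modu{a_\alpha/d_\alpha} < 1$, i.e.\ $\modu{a_\alpha} < d_\alpha$. Since this holds for every $\alpha \in I(\KP_n) \setminus \{\rho_0^+\}$, the condition \eqref{Hcv0} is satisfied.

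There is no substantial obstacle: both directions reduce to one-line invocations of the bounds of the two previous lemmas, together with the finiteness of $I(\KP_n)$ (needed only to turn the finite sum in \eqref{UpBd} into a geometric-type bound). The only point worth flagging is that one should implicitly use Remark \ref{rkMod} to know that the ratios $\modu{a_\alpha}/d_\alpha$ already lie in $[0,1]$, so that the dichotomy between the open unit disc and the point $1$ in Theorem \ref{CondCv} is cleanly ruled in favor of strict inequality under the convergence hypothesis.
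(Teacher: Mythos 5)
Your proof is correct and takes essentially the same approach as the paper: sufficiency follows from the upper bound \eqref{UpBd}, and necessity from the lower bound \eqref{LoBd}. The paper merely phrases the necessity contrapositively (if \eqref{Hcv0} fails then $\modu{a_\alpha} = d_\alpha$ for some non-trivial $\alpha$, so the distance stays at least $\frac{1}{2}$ for all $k$), which is equivalent to your argument.
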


\begin{proof}
The condition \eqref{Hcv0} is clearly sufficient for the the random walk to convergence to the Haar state, thanks to the upper bound \eqref{UpBd}. Moreover, if \eqref{Hcv0} is not satisfied for some non trivial representation, by \eqref{LoBd}, we obtain that the sequence $\left(\Fo(a)^{\star k}\right)_{k \in \NN}$ stays "uniformly far away from randomness", since the norm is at least $\frac{1}{2}$ for all $k$. Thus the condition \eqref{Hcv0} is also necessary.
\end{proof}

\begin{remark}
Let us note that if the condition \eqref{Hcv0} is not satisfied for $\Xuv{u}{v}$, $a_{\Xuv{u}{v}} = 2$, by Theorem \ref{CondCv}. Hence
\[\widehat{\Fo(a)}(\Xuv{n-u}{v}) = \frac{1}{2} a_{\Xuv{u}{v}} I_2 = I_2\]
and the limit of the sequence $\left( \widehat{\Fo(a)}(\Xuv{n-u}{v})^k \right)_{k \in \NN}$ is the identity. But, $\widehat{\hwn\;}(\Xuv{p}{q})$ is the matrix null except if $p=q=0$, it means that the limit state is not $\hwn\;$. This is another way to prove that the condition \eqref{Hcv0} is necessary for the two-dimensional representations. We will use this method to study the asymptotic behavior of the random walk.
\end{remark}

\subsection{Cut-off phenomenon}
\label{cutoff}

In the classical case, we frequently observe cut-off phenomenon, i.e.\ the distance between $\Fo(a)^{\star k}$ and the Haar state is almost $1$ for a time and then suddenly tends to $0$, exponentially fast. There exist different definitions of the threshold (see for example \cite{SaloffCoste},\cite{DiaconisDefCO} and \cite{roussel}), depending on how sharp you want the cut-off.

Note that, by upper and lower bounds, \eqref{UpBd} and \eqref{LoBd}, we have roughly
\[\frac{1}{2} M_a^k \leq \norm{\Fo(a)^{\star k} - \hwn\,}_{QTV} \leq \frac{n}{\sqrt{2}} M_a^k\]
where $M_a$ is the maximum of the $\frac{\modu{a_\alpha}}{d_\alpha}$, $\alpha$ non-trivial. For $n$ fixed, if $M_a$ is less than $1$, the random walk converges to the Haar state. In this subsection, we will look at what happens if $a$ and $k$ depend on $n$. To give a definition of $a$ depending quite naturally on $n$, we look at some quantum version of the simple random walk on $\Zn$, studied in \cite[Theorem 2, Chapter 3C]{DiaconisExCO}.

If we only consider these bounds and want that $n M_a^k$ vanishes when $n$ goes to infinity, then the limit of $M_a^k$ is also $0$. Moreover, since the lower bound is at most $\frac{1}{2}$, it will be ineffective to prove the cut-off. Actually, for our example, we are able to prove that there is no cut-off, as in its classical version.

\begin{remark}
In \cite{cutOffJP}, McCarthy gives another example of a random walk in Sekine family where there is no cut-off, but with a state which formally does not depend on $n$ and which is not the Fourier transform of an element in the central algebra. Note that in \cite{cutOffOn} and \cite{cutOffUn}, Freslon finds cut-off examples in compact quantum groups.
\end{remark}

\begin{theorem}
For $n$ odd, define the element $a \in C(\KP_n)$ such that for any $l \in \Zn$, $a_{\rho_l^+} = \cos\left(\frac{2l\pi}{n}\right)$ and $a_\alpha = 0$ otherwise. Then $\Fo(a)$ defines a state on $C(\KP_n)$, such that, for any non negative $c$, if $k = e^cn^2$,
\[\frac{1}{2} e^{-k\left(\frac{\pi^2}{2n^2}+ \frac{\pi^4}{4n^4}\right)} \leq\norm{\Fo(a)^{\star k} - \hwn\,}_{QTV} \leq e^{-k\frac{\pi^2}{2n^2}} \text{ .}\]
Moreover the lower bound still holds when $k = e^{-c}n^2$.
\end{theorem}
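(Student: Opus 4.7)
The plan is to verify separately that $\Fo(a)$ is a state, to derive the two displayed inequalities from the upper and lower bound lemmas \eqref{UpBd} and \eqref{LoBd}, and finally to observe that the lower bound requires no change when $k=e^{-c}n^2$.

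First, by Lemma~\ref{HaPos}, $\Fo(a)$ is a state provided $a_{\rho_0^+}=1$, every $a_{(i,j)}\geq 0$, and $A$ is positive semidefinite. Since $n$ is odd $\indic_{2\NN}(n)=0$, only the $a_{\rho_l^+}$'s contribute, and one computes
\[
a_{(i,j)}=\sum_{l=0}^{n-1}\cos\!\Bigl(\tfrac{2l\pi}{n}\Bigr)\eta^{il}=\tfrac12\sum_{l=0}^{n-1}\bigl(\eta^{(i+1)l}+\eta^{(i-1)l}\bigr)=\tfrac{n}{2}\bigl(\delta_{i,1}+\delta_{i,n-1}\bigr)\geq 0,
\]
while $A_{ij}=\cos(2(j-i)\pi/n)=\tfrac12(\eta^{i-j}+\eta^{j-i})$ exhibits $A$ as $\tfrac12(vv^*+\bar v\bar v^*)$ with $v_k=\eta^k$, a sum of rank-one positive matrices.

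For the upper bound, since only the one-dimensional characters contribute, \eqref{UpBd} reduces to $\norm{\Fo(a)^{\star k}-\hwn}_{QTV}^2\leq\tfrac14\sum_{l=1}^{n-1}\cos^{2k}(2l\pi/n)$. Pairing $l$ with $n-l$ halves the sum, and the correspondence $l\mapsto 2l$ if $2l\leq(n-1)/2$, $l\mapsto n-2l$ otherwise, provides a bijection $\{1,\ldots,(n-1)/2\}\to\{1,\ldots,(n-1)/2\}$ with $|\cos(2l\pi/n)|=\cos(m(l)\pi/n)$; injectivity uses that $2l_1+2l_2=n$ has no integer solutions when $n$ is odd. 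Hence
\[
\sum_{l=1}^{n-1}\cos^{2k}(2l\pi/n)=2\sum_{m=1}^{(n-1)/2}\cos^{2k}(m\pi/n)\leq 2\sum_{m=1}^{(n-1)/2}e^{-km^2\pi^2/n^2},
\]
using $\log\cos x\leq -x^2/2$ on $[0,\pi/2]$. Factoring $e^{-k\pi^2/n^2}$ and exploiting $c\geq 0$ (i.e.\ $k\geq n^2$) to bound the remainder $\sum_{m\geq 1}e^{-k(m^2-1)\pi^2/n^2}\leq\sum_{m\geq 1}e^{-(m^2-1)\pi^2}<2$ yields $\sum\cos^{2k}\leq 4e^{-k\pi^2/n^2}$ and therefore $\norm{\cdot}_{QTV}\leq e^{-k\pi^2/(2n^2)}$.

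For the lower bound, applying \eqref{LoBd} with $\alpha=\rho_{(n-1)/2}^+$ (for which $|a_\alpha|=|\cos((n-1)\pi/n)|=\cos(\pi/n)$) gives $\norm{\cdot}_{QTV}\geq\tfrac12\cos^k(\pi/n)$. It remains to establish $\log\cos(\pi/n)\geq-\pi^2/(2n^2)-\pi^4/(4n^4)$, which follows by writing $-\log\cos x=-\log(1-y)$ with $y=1-\cos x\leq\pi^2/(2n^2)$ and applying $-\log(1-y)\leq y+\tfrac{y^2}{2(1-y)}\leq y+y^2$, valid because $\cos(\pi/n)\geq 1/2$ for $n\geq 3$. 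The last assertion of the theorem requires no new work, since \eqref{LoBd} holds for every $k$: the same chain of inequalities applies verbatim when $k=e^{-c}n^2$, and the point is precisely that for such $k$ the exponent stays $O(1)$, so the walk remains uniformly far from the Haar state. The main technical obstacle is keeping the upper bound's multiplicative constant equal to $1$: without the bijection reducing the sum to $(n-1)/2$ reduced angles and the Gaussian-type tail decay, a naive bound by the maximum term would introduce a spurious factor of $\sqrt{n}$.
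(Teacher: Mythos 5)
Your proof is correct and follows essentially the same route as the paper's: Lemma~\ref{HaPos} for the state property, the bounds \eqref{UpBd} and \eqref{LoBd}, the reduction of the character sum to $\sum_{m=1}^{(n-1)/2}\cos^{2k}(m\pi/n)$, the Gaussian bound $\cos x\leq e^{-x^2/2}$ with a tail estimate valid for $k\geq n^2$, and the single surviving lower bound $\tfrac12\cos^k(\pi/n)$ for all $k$. The only (harmless) differences are in how three elementary steps are justified -- you show $A\succeq 0$ via a rank-one decomposition rather than the circulant eigenvalue formula, you make the passage from angles $2l\pi/n$ to $m\pi/n$ an explicit bijection (hence an equality, where the paper only asserts an inequality), and you prove $\cos x\geq e^{-x^2/2-x^4/4}$ via the series for $-\log(1-y)$ instead of a monotonicity argument on $g(x)=\log\bigl(e^{x^2/2+x^4/4}\cos x\bigr)$.
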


\begin{remark}
If $n$ is even, the same definition for $a_{\rho_l^+}$ leads to $a_{\rho_{\frac{n}{2}}^+} = -1$ and by Theorem \ref{ThHaar} the random walk does not converge to the Haar state.
\end{remark}

\begin{proof}
First let us prove that $\Fo(a)$ is a state on $C(\KP_n)$. We only need to check that every $a_{(i,j)}$ is non negative and that the matrix $A$ is semidefinite positive, since $a_{\rho_0^+}$ is equal to $1$ by definition.

Note that, for any $i$ and any $j$ in $\Zn$,
\[a_{(i,j)} = \sum_{l=0}^{n-1} \cos\left(\frac{2l\pi}{n}\right) \eta^{il} = \frac{n}{2}\left( \delta_{i,1} + \delta_{i,n-1}\right)\]
is always non negative and the equality $A_{i,j} = \cos\left(\frac{2(j-i)\pi}{n}\right)$ defines a circulant matrix, whose eigenvalues are given by the same formula as the $a_{(i,j)}$'s, for $i$ between $0$ and $n-1$. Thus, the matrix $A$ is semidefinite positive.

On the one hand, following the arguments of the proof of Theorem 2 in Chapter 3C of \cite{DiaconisExCO}, the upper bound \eqref{UpBd} gives
\begin{align*}
\norm{\Fo(a)^{\star k} - \hwn\,}_{QTV}^2 &\leq \frac{1}{4} \sum_{l=1}^{n-1} \cos\left(\frac{2l\pi}{n}\right)^{2k}\\
&\leq \frac{1}{2} \sum_{l=1}^{\frac{n-1}{2}} \cos\left(\frac{l\pi}{n}\right)^{2k}\\
&\leq \frac{1}{2} \sum_{l=1}^{\frac{n-1}{2}} e^{-\frac{k l^2 \pi^2}{n^2}}
\end{align*}
where we use that $\cos(x) \leq e^{-\frac{x^2}{2}}$ for $0 \leq x < \frac{\pi}{2}$. Finally, since for every integer $j$ we have $(j+1)^2-1 \geq 3j$,
\begin{align*}
\norm{\Fo(a)^{\star k} - \hwn\,}_{QTV}^2 &\leq \frac{e^{-\frac{k \pi^2}{n^2}}}{2} \sum_{j=0}^{+\infty} e^{-\frac{3k j \pi^2}{n^2}}\\
& \hspace{15pt}= \frac{e^{-\frac{k \pi^2}{n^2}}}{2 \left(1 - e^{-\frac{3k \pi^2}{n^2}}\right)} \text{ .}
\end{align*}
This works for any $k$ and any odd $n$. Moreover, note that, for $k$ greater than $n^2$, the denominator $2 \left(1 - e^{-\frac{3k \pi^2}{n^2}}\right)$ is greater than $1$, thus we obtain the announced upper bound.

On the other hand, the lower bound \eqref{LoBd} leads to
\begin{align*}
\norm{\Fo(a)^{\star k} - \hwn\,}_{QTV} \geq& \frac{1}{2} \max\limits_{0 \leq l \leq n-1} \modu{\cos\left(\frac{2\pi l}{n}\right)}^k\\
&= \frac{1}{2} \cos\left(\frac{\pi}{n}\right)^k \text{ .}
\end{align*}
Let us define $g(x) = \log\left(e^{\frac{x^2}{2} + \frac{x^4}{4}}\cos(x)\right)$. Since $g^\prime(x) = x + x^3-\tan(x)$ is positive on $\left]0\;;\;\frac{\pi}{3}\right]$, $g(x)$ is also positive on the same interval. Hence
\[\norm{\Fo(a)^{\star k} - \hwn\,}_{QTV} \geq \frac{1}{2} \cos\left(\frac{\pi}{n}\right)^k \geq \frac{1}{2} e^{-\frac{k\pi^2}{2n^2}-\frac{k\pi^4}{4n^4}} \text{ .}\]
This works for any $k$ and any odd $n\geq 3$. \qed
\end{proof}

\begin{remark}
Thus, the limit of $\norm{\Fo(a)^{\star k(n)} - \hwn\,}_{QTV}$, when $n$ and $\frac{k(n)}{n^2}$ go to infinity, is null, and conversely, is always at least $\frac{1}{2}e^{-\frac{\pi^2}{2}-\frac{\pi^4}{36}}$ for $k(n)$ less than $n^2$.
\end{remark}

\begin{remark}
These bounds give in particular that for any positive integer $C$,
\[0 < \liminf\limits_{n \to + \infty} \norm{\Fo(a)^{\star Cn^2} - \hwn\,}_{QTV} \leq \limsup\limits_{n \to + \infty} \norm{\Fo(a)^{\star Cn^2} - \hwn\,}_{QTV} < 1 \text{ .}\]
Thus $k$ should satisfies $\lim\limits_{n \to +\infty} \frac{k}{n^2} = +\infty$ in order to get the convergence of the upper bound to $0$ when $n$ goes to infinity. But, in this case, the lower bound vanishes too. This is precisely the meaning of a ``no cut-off statement.
\end{remark}

\subsection{Asymptotic behavior}

We go back to the setting where $n$ and $a$ are fixed, and $k$ does not depend on them. We now study what happens when $k$ goes to infinity.

\subsubsection{Possible limit states}
Let us note that, when the random walk converges, the limit state $\mu$ is an idempotent state. Thanks to Zhang \cite{Haonan}, we know all the idempotent states on the Sekine quantum groups. There are four different types:
\begin{itemize}
	\item $\hwn$ , the Haar state,
	\item $h_\Gamma = \frac{2n^2}{\#\Gamma} \sum\limits_{(i,j) \in \Gamma} \hspace{-5pt}\Fo(e_{(i,j)})$ where $\Gamma$ is a subgroup of $\Zn \times \Zn$ ,
	\item $h_{\Gamma,l} =  \frac{n^2}{\#\Gamma} \sum\limits_{(i,j) \in \Gamma} \hspace{-5pt}\Fo(e_{(i,j)}) + q \sum\limits_{m \equiv l [q]} \Fo(E_{m,m})$ where $\Gamma = \Zn \times q\Zn$, $q \mid n$, $q > 1$ and $l \in \mathbb{Z}_q$ ,
	\item $h_{\Gamma,l, \tau} =  \frac{n^2}{\#\Gamma}\sum\limits_{(i,j) \in \Gamma}\hspace{-5pt}\Fo(e_{(i,j)}) + q\sum\limits_{i,j \equiv l [q]}\hspace{-3pt}\tau_{j-i}\Fo(E_{i,j})$ where $\Gamma = p\Zn \times q\Zn$, $p > 1$, $pq = n$, $l \in \mathbb{Z}_q$ and $\tau \in \{-1, 1\}^{q\Zn}$ such that $\sum\limits_{j \in q\Zn}\hspace{-5pt}\tau_j \eta^{ij} > 0$ for all $i \in \mathbb{Z}_p$.	
\end{itemize}

Let us note that the two first types are Haar idempotent states, which means that they arise from the Haar state of a quantum subgroup.

\begin{remark}
We already know that
\[\widehat{\hwn}(\Xuv{u}{v}) = \delta_{(u,v),(0,0)} I_2 \text{ .}\]
The Remark 2.11 in \cite{Haonan} gives a useful characterization of types $h_{\Gamma, l}$ and $h_{\Gamma, l, \tau}$ by the $\hat{\mu}(\Xuv{u}{v})$'s:
\begin{equation}
\widehat{h_{\Gamma, l}}(\Xuv{u}{v}) = \begin{cases}
\frac{1}{2} \begin{pmatrix} 1&\eta^{-vl}\\\eta^{vl}&1\end{pmatrix} &\text{ if } u = 0 \text{ and } \frac{n}{q} \mid v\\
\begin{pmatrix}0&0\\0&0\end{pmatrix}&\text{ otherwise}
\end{cases}
\label{CarhGl}
\end{equation}
\begin{equation}
\widehat{h_{\Gamma, l, \tau}}(\Xuv{u}{v}) = \begin{cases}
\frac{1}{2} \begin{pmatrix} 1&\tau_{-v}\eta^{-vl}\\\tau_v\eta^{vl}&1\end{pmatrix} &\text{ if } q \mid u \text{ and } \frac{n}{q} \mid v\\
\begin{pmatrix}0&0\\0&0\end{pmatrix}&\text{ otherwise}
\end{cases}
\label{CarhGlT}
\end{equation}
In particular, for $\mu$ of these types, and for each $v$ between $1$ and $\lfloor \frac{n-1}{2}\rfloor$, the matrix $\hat{\mu}(\Xuv{n-u}{v})$ is either null or not diagonal. Thus, when some $a_{\Xuv{u}{v}}$ equals $2$, by the proof of Theorem \ref{ThHaar}, the limit state should be of type $h_\Gamma$, if it exists.
\label{rk2}
\end{remark}

\begin{proposition}
If the random walk associated to $a$ converges, the limit state $\mu$ is a central idempotent state.
\label{IdemCent}
\end{proposition}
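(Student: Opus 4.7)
The plan is to split the statement into two claims: that $\mu$ is idempotent, and that $\mu$ lies in the Fourier image of the central algebra $Z := \LinSpan\{\chi_\alpha : \alpha \in I(\KP_n)\}$. Idempotency is automatic from continuity of $\star$ on the finite-dimensional dual:
\[\mu \star \mu = \bigl(\lim_k \Fo(a)^{\star k}\bigr) \star \bigl(\lim_k \Fo(a)^{\star k}\bigr) = \lim_k \Fo(a)^{\star 2k} = \mu \text{ .}\]
For centrality I would transfer the problem back to $C(\KP_n)$ via the Fourier transform and show that the pre-images $\Fo^{-1}(\Fo(a)^{\star k})$ stay inside $Z$ with an explicitly computable limit.

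The key algebraic input is the orthogonality relation
\[\chi_\alpha \star \chi_\beta = \frac{\delta_{\alpha,\beta}}{d_\alpha}\chi_\alpha \text{ ,}\]
a direct consequence of the Peter-Weyl orthogonality for Kac-type quantum groups already invoked in the proofs of Lemma \ref{LemBd} and its Sekine analogue. This identity makes $Z$ a commutative subalgebra of $(C(\KP_n),\star)$ and, by induction on $k$, yields the explicit formula
\[a^{\star k} = \sum_{\alpha \in I(\KP_n)} \frac{a_\alpha^k}{d_\alpha^{k-1}}\, \chi_\alpha\]
already used in the $\KP$ section. Since $\Fo$ is a bijection between the finite-dimensional spaces $C(\KP_n)$ and its dual, convergence of $\Fo(a)^{\star k}$ to $\mu$ forces convergence of $a^{\star k}$ to some $b \in C(\KP_n)$ with $\mu = \Fo(b)$, and the problem reduces to identifying $b$.

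Taking the limit coefficient by coefficient, Theorem \ref{CondCv} asserts that each ratio $\frac{a_\alpha}{d_\alpha}$ lies in $\uD \cup \{1\}$, so
\[\frac{a_\alpha^k}{d_\alpha^{k-1}} = d_\alpha \Bigl(\frac{a_\alpha}{d_\alpha}\Bigr)^k \xrightarrow[k \to \infty]{} \begin{cases} 0 & \text{if } \modu{a_\alpha} < d_\alpha,\\ d_\alpha & \text{if } a_\alpha = d_\alpha. \end{cases}\]
Hence $b = \sum_{\alpha\,:\,a_\alpha = d_\alpha} d_\alpha\, \chi_\alpha$ lies in $Z$, and $\mu = \Fo(b)$ is by definition a central idempotent state. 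I do not expect a serious obstacle: the only delicate point is correctly pinning down the convolution identity for characters in the paper's normalizations, and this is entirely parallel to the computation already carried out in the $\KP$ section.
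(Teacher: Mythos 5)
Your idempotency argument is fine, and your identification of the limit is correct: by Theorem \ref{CondCv} each $\frac{a_\alpha}{d_\alpha}$ lies in $\uD\cup\{1\}$, so $a^{\star k}\to b=\sum_{\alpha:\,a_\alpha=d_\alpha}d_\alpha\chi_\alpha$ and $\mu=\Fo(b)$ with $b$ in the span of the irreducible characters. The gap is in your final sentence. The paper's definition of a central idempotent state is a state $\phi$ with $\phi\star\phi=\phi$ and $\phi\star\psi=\psi\star\phi$ for \emph{every} bounded linear functional $\psi$ on $C(\KP_n)$; it is not ``by definition'' true that the Fourier transform of a linear combination of characters has this commutation property. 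The orthogonality relation $\chi_\alpha\star\chi_\beta=\frac{\delta_{\alpha,\beta}}{d_\alpha}\chi_\alpha$ only shows that the character algebra $Z$ is a \emph{commutative subalgebra} of $(C(\KP_n),\star)$, i.e.\ that $\Fo(b)$ commutes with other central functionals; it does not show that $Z$ sits inside the \emph{center} of the full convolution algebra. That inclusion is exactly the nontrivial content of the paper's proof, which establishes $a\star b=b\star a$ for $a\in Z$ and \emph{arbitrary} $b\in C(\KP_n)$ by a Sweedler-notation computation using the cocommutativity of characters ($\sigma(\cop(\chi_\alpha))=\cop(\chi_\alpha)$), the antipode identity $(S\otimes S)\circ\cop=\sigma\circ\cop\circ S$, and the Kac property (traciality and $S$-invariance of the Haar state). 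Your proposal omits this step entirely, and it is the heart of the proposition.

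The gap is fixable without reproducing the paper's computation: for every $\beta\in I(\KP_n)$ the orthogonality relations give $\widehat{\Fo(\chi_\alpha)}(\beta)=\frac{\delta_{\alpha,\bar{\beta}}}{d_\alpha}I_{d_\beta}$, so $\widehat{\mu}(\beta)$ is a scalar multiple of the identity matrix; since $\widehat{\phi\star\psi}(\beta)=\widehat{\phi}(\beta)\widehat{\psi}(\beta)$ and the map $\phi\mapsto(\widehat{\phi}(\beta))_{\beta\in I(\KP_n)}$ is injective on the dual of a finite quantum group, scalar matrices commute with everything and $\mu\star\psi=\psi\star\mu$ follows. With that addition your route is a legitimate alternative to the paper's Hopf-algebraic argument, and it has the advantage of exhibiting the limit state explicitly; as written, however, the centrality claim is asserted rather than proved.
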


\begin{proof}
A central idempotent state is a state $\phi$ on $C(\KP_n)$ such that $\phi \star \phi = \phi$ and $\phi \star \psi = \psi \star \phi$ for every bounded linear functional $\psi$ on $C(\KP_n)$.

Let us note that if $\Fo(a)^{\star k} \star \psi = \psi \star \Fo(a)^{\star k}$ holds for all $k$, then $\mu \star \psi = \psi \star \mu$ is verified, where $\mu$ is the limit state of the random walk $\Fo(a)^{\star k}$. Thus, we will study the commutativity for the random walk. Actually, we only need to check if $\Fo(a)$ is central, for $a$ a linear combination of irreducible characters.

Let $\psi$ be a linear functional on $C(\KP_n)$. Then, there exists $b$ in $C(\KP_n)$ such that $\psi = \Fo(b)$. We want to prove the equality $\psi \star \Fo(a) = \Fo(a) \star \psi$. This is to show that if $a$ is a linear combination of characters, $a \star b = b \star a$ holds for all $b$ in $C(\KP_n)$.

Using Sweedler notation, by definition of the convolution product on $C(\KP_n)$, since the Haar state is tracial and $S$ is involutive,
\begin{align*}
b \star a &= \sum_{(a)} a_{(2)} \hwn S(a_{(1)})b\\
&= \left(\hwn\, \otimes \rho_0^+\right)\left[(b \otimes \rho_0^+)\, \cdot (S\otimes \rho_0^+)\cop(a) \right]\\
&= \left(\hwn\, \otimes S\right)\left[(b \otimes \rho_0^+)\, \cdot (S\otimes S)\cop(a) \right] \text{ .}
\end{align*}
Let us denote by $\sigma$ the morphism $x \otimes y \mapsto y \otimes x$, and note that for any $\alpha$ in $I(\KP_n)$, $\sigma(\cop(\chi_\alpha)) = \cop(\chi_\alpha)$.

The property of the antipode gives $(S\otimes S)\cop(a) = \sigma\left(\cop(S(a))\right)$. Moreover the element $S(a)$ is also a linear combination of characters, hence $\sigma\left(\cop\left(S(a)\right)\right) = \cop(S(a))$. Thus, we get
\begin{align*}
b \star a &= S\left(\hwn \otimes \rho_0^+\right)\left[(b \otimes \rho_0^+)\, \cdot \cop(S(a)) \right]\\
&= S^2\left(\hwn \otimes \rho_0^+\right)\left[\cop(b) \, \cdot (S(a) \otimes \rho_0^+) \right]\\
&=\left(\hwn \otimes \rho_0^+\right)\left(S \otimes \rho_0^+\right)\left[\left(S \otimes \rho_0^+\right)\cop(b) \, \cdot (a \otimes \rho_0^+) \right]\\
&= a \star b
\end{align*}
where the second equality comes from the Kac property, and the last one from the fact that $\hwn S(x) = \hwn x$. \qed
\end{proof}

\begin{remark}
This result is true in any finite quantum group since they satisfy the Kac property. Note that the Haar state is always invariant under the antipode on a compact quantum group \cite{idem}.
\end{remark}

\subsubsection{Convergence to $h_\Gamma$}
\begin{lemma}
Assume that the random walk associated to $a$ converges, and denote by $\mu$ the limit state. The followings are equivalent:
\begin{enumerate}[label=\roman*)]
	\item $\forall l \in \Zn, \ a_{\rho_l^+} = a_{\rho_l^-}$ and $a_{\sigma_l^+} = a_{\sigma_l^-}$ (when $n$ is even)
	\item $a_{\rho_0^-} = 1$
	\item $\mu$ is a $h_\Gamma$ for $\Gamma$ a subgroup of $\Zn \times \Zn$ such that
\begin{equation}
(k,l) \in \Gamma \ \ \Longleftrightarrow \ \ (k, -l) \in \Gamma
\label{eqSim}
\end{equation}
\end{enumerate}
\label{hGamma}
\end{lemma}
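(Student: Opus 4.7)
The plan is to prove the equivalences cyclically, with (i) $\Leftrightarrow$ (ii) following from a positivity argument based on Lemma \ref{HaPos}, and then closing (i) $\Rightarrow$ (iii) $\Rightarrow$ (ii) using the explicit convolution formula together with Zhang's classification of idempotents and Proposition \ref{IdemCent}.

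I would first dispatch (ii) $\Rightarrow$ (i). The key is that the positivity condition of Lemma \ref{HaPos} forces the matrix $A=(A_{ij})$ to be positive semidefinite. Under the assumption $a_{\rho_0^-}=1$, the diagonal $A_{ii}=a_{\rho_0^+}-a_{\rho_0^-}+\indic_{2\NN}(n)(-1)^i(a_{\sigma_0^+}-a_{\sigma_0^-})$ collapses to $0$ when $n$ is odd and to $(-1)^i(a_{\sigma_0^+}-a_{\sigma_0^-})$ when $n$ is even; in the even case, demanding $A_{ii}\geq 0$ for both parities of $i$ forces $a_{\sigma_0^+}=a_{\sigma_0^-}$. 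Hence the diagonal of $A$ is zero, and a positive semidefinite matrix with zero diagonal vanishes, so $A=0$. Reading off the entries $A_{ij}=0$ for both parities of $i$ then yields $a_{\rho_l^+}=a_{\rho_l^-}$ (and $a_{\sigma_l^+}=a_{\sigma_l^-}$ when $n$ is even) for every $l$, which is (i). The converse (i) $\Rightarrow$ (ii) is immediate by specialising to $l=0$ and using $a_{\rho_0^+}=1$.

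Next I would prove (i) $\Rightarrow$ (iii). The orthogonality relations yield $\chi_\alpha\star\chi_\beta=\delta_{\alpha,\beta}\chi_\alpha/d_\alpha$, so for a central element $a=\sum_\alpha a_\alpha \chi_\alpha$ we get $a^{\star k}=\sum_\alpha \frac{a_\alpha^k}{d_\alpha^{k-1}}\chi_\alpha$. Under the convergence hypothesis (Theorem \ref{CondCv}), each coefficient tends to $0$ when $|a_\alpha|<d_\alpha$ and stays at $d_\alpha$ when $a_\alpha=d_\alpha$, so the limit is $\mu=\Fo(c)$ with $c=\sum_{\alpha\in S}d_\alpha\chi_\alpha$, where $S=\{\alpha: a_\alpha=d_\alpha\}$. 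Under (i), $S$ is closed under the pairings $\rho_l^+\leftrightarrow\rho_l^-$ and $\sigma_l^+\leftrightarrow\sigma_l^-$, so the $E$-contributions $\pm\sum_m E_{m,m+l}$ of $\rho_l^\pm$ and $\pm\sum_m(-1)^m E_{m,m+l}$ of $\sigma_l^\pm$ cancel; since $\chi(\Xuv{u}{v})$ has no $E$-part, $\mu$ vanishes on every $E_{i,j}$. The limit $\mu$ is an idempotent state (being a limit of $\Fo(a)^{\star 2k}=\Fo(a)^{\star k}\star\Fo(a)^{\star k}$), and by Zhang's classification \cite{Haonan} the idempotent states with vanishing $E$-part are exactly the $h_\Gamma$'s, so $\mu=h_\Gamma$ for some subgroup $\Gamma\leq\Zn\times\Zn$. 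To obtain the symmetry \eqref{eqSim}, I invoke Proposition \ref{IdemCent}: $\mu$ is central, so $c=\frac{2n^2}{\#\Gamma}\sum_{(i,j)\in\Gamma}e_{(i,j)}$ lies in the central algebra, namely the span of characters. Inspecting the explicit $e$-parts of the characters, the only $j$-dependencies are through $\eta^{il}$ (constant in $j$), $\cos(2tv\pi/n)$, and $(-1)^j$, all invariant under $j\mapsto -j$, so $\indic_\Gamma$ is forced to be symmetric in the second coordinate, which is exactly \eqref{eqSim}.

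Finally, (iii) $\Rightarrow$ (ii) is a one-line computation: using $\Fo(e_{(s,t)})(e_{(i,j)})=\frac{1}{2n^2}\delta_{si}\delta_{tj}$ and $\Fo(e_{(s,t)})(E_{m,m})=0$, one gets $h_\Gamma(\rho_0^-)=\frac{1}{\#\Gamma}\sum_{(i,j)\in\Gamma}1=1$. Since $\rho_0^-$ is one-dimensional we have $\Fo(a)^{\star k}(\rho_0^-)=a_{\rho_0^-}^k$, and convergence of this sequence to $1$ combined with $|a_{\rho_0^-}|\leq 1$ (Remark \ref{rkMod}) forces $a_{\rho_0^-}=1$. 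The main obstacle I anticipate is the centrality-to-symmetry step of (i) $\Rightarrow$ (iii): one must verify cleanly that every character's $e$-part is even in $j$ and pass this symmetry back to $\indic_\Gamma$; the rest of the argument reduces to the orthogonality of characters and the positive semidefiniteness of $A$.
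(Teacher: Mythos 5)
Your proof is correct and follows essentially the same route as the paper: (ii)\,$\Rightarrow$\,(i) via the positive semidefinite matrix $A$ having zero diagonal (hence vanishing), and the link to (iii) via the explicit formula $a^{\star k}=\sum_\alpha a_\alpha^k d_\alpha^{1-k}\chi_\alpha$ combined with Zhang's classification of idempotent states. The only cosmetic differences are that you organise the equivalences cyclically and single out the type $h_\Gamma$ by the vanishing of the entire $E$-part of the limit rather than by the two sums $\sum_{i,j}\mu_{(i,j)}$ and $\sum_m M_{m,m}$ that the paper reads off from Zhang's list, and your detour through Proposition~\ref{IdemCent} to obtain the symmetry \eqref{eqSim} is unnecessary, since you already know the limit is an explicit linear combination of characters, whose $e_{(i,j)}$-coefficients are even in $j$.
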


\begin{proof}
The implication $i) \Rightarrow ii)$ is clear. The converse follows from the fact that $A$ is positive semidefinite.

Now, let us look at the equivalence between $ii)$ and $iii)$. First of all, let us note that
\[a^{\star k} = \sum_{\alpha \in I(\KP_n)} \frac{a_\alpha ^k}{d_\alpha^{k-1}} \chi_\alpha \text{ .}\]
Let us denote by $a_{(i,j)}^{(k)}$ and $A_{i,j}^{(k)}$ its coefficients in the canonical basis, and by $\mu_{(i,j)}$ and $M_{i,j}$ the coefficients of $\mu$ in basis $\{\Fo(e_{(i,j)}), \Fo(E_{i,j}), 1 \leq i, j \leq n\}$.

Then by \cite{Haonan}, either $\sum\limits_{i,j \in \Zn}\hspace{-5pt}\mu_{(i,j)} = 2n^2$ and $\sum\limits_{m = 1}^n\hspace{-3pt}M_{m,m} = 0$ and $\mu$ is of type $h_\Gamma$, or $\sum\limits_{i,j \in \Zn}\hspace{-5pt}\mu_{(i,j)} = n^2$ and $\sum\limits_{m = 1}^n\hspace{-3pt}M_{m,m} = n$ and $\mu$ is of another type.

On the other hand,
\[\sum_{i,j \in \Zn}\mu_{(i,j)} = \lim_{k \to +\infty} \sum_{i,j \in \Zn} a_{(i,j)}^{(k)} = \lim_{k \to +\infty} n^2 \left( 1 + a_{\rho_0^-}^k\right)\]
\[\sum_{m = 1}^nM_{m,m} = \lim_{k \to +\infty} \sum_{i \in \Zn} A_{i,i}^{(k)} = \lim_{k \to +\infty} n \left( 1 - a_{\rho_0^-}^k\right)\]

Therefore the limit $\mu$ satisfies $\sum\limits_{i,j \in \Zn}\hspace{-5pt}\mu_{(i,j)} = 2n^2$ and $\sum\limits_{m = 1}^n\hspace{-3pt}M_{m,m} = 0$ if and only if $a_{\rho_0^-} = 1$.

Moreover, for $a$ a linear combination of irreducible characters, $a_{(i,-j)}^{(k)} = a_{(i,j)}^{(k)}$ then $\mu_{(i,j)} = \mu_{(i,-j)}$. Thus the subgroup $\Gamma$ of $\mathbb{Z}_n \times \mathbb{Z}_n$ satisfies the condition \eqref{eqSim}. \qed
\end{proof}

\begin{remark}
By Remark \ref{rk2}, we obtain that if the random walk converges and there exists $\alpha \in I(\KP_n)$ such that $a_\alpha = d_\alpha  = 2$, then $a_{\rho_0^-} = 1$.
\end{remark}

\begin{lemma}
Assume that the random walk associated to the element $a$ converges. If there exists a one-dimensional representation $\alpha^+$ in $I(\KP_n)$ such that $a_{\alpha^+} = a_{\alpha^-} = 1$, then the limit state is of type $h_\Gamma$, with $\Gamma$ a subgroup of $\Zn \times \Zn$ satisfying \eqref{eqSim}.
\label{lemEq}
\end{lemma}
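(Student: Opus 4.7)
The plan is to apply Lemma~\ref{hGamma} by establishing its condition~(ii), namely $a_{\rho_0^-}=1$, which will give~(iii) directly. Let $\mu$ denote the limit state. Since $(\rho_0^-)^{-1}=\rho_0^-$, the identity $\widehat{\Fo(a)}(\beta)=a_{\beta^{-1}}$ (valid for one-dimensional $\beta$, as used in the proof of Theorem~\ref{CondCv}) gives $\mu(\rho_0^-)=\lim_k a_{\rho_0^-}^k$. The convergence hypothesis together with Theorem~\ref{CondCv} forces $a_{\rho_0^-}\in\mathbb{D}\cup\{1\}$, so the limit equals $1$ exactly when $a_{\rho_0^-}=1$. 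It therefore suffices to prove $\mu(\rho_0^-)=1$.

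The algebraic backbone is the identity
\[
\alpha^+\cdot(\alpha^-)^{-1}\;=\;\rho_0^- \qquad \text{in } C(\KP_n),
\]
which I would verify by expanding both factors in the canonical basis. For $\alpha^+=\rho_l^+$ one has $(\rho_l^-)^{-1}=(\rho_l^-)^*=\rho_{-l}^-$; the $e$-parts multiply to $\sum_{i,j}e_{(i,j)}$, and the $E$-parts telescope to $-\sum_m E_{m,m}$, recovering $\rho_0^-$. For $\alpha^+=\sigma_l^+$ (relevant only when $n$ is even) a short parity split is needed, because $(\sigma_l^-)^{-1}$ equals $\sigma_{-l}^-$ when $l$ is even and $\sigma_{-l}^+$ when $l$ is odd; in both subcases a direct expansion shows the product still collapses to $\rho_0^-$.

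To combine this identity with the hypothesis I would use the Cauchy--Schwarz inequality for the state~$\mu$. The Hermitian property of $\Fo(a)$, combined with $\widehat{\Fo(a)}(\beta)=a_{\beta^{-1}}$, yields $a_{\beta^{-1}}=\overline{a_\beta}$ for every one-dimensional $\beta$; applied to $\alpha^\pm$ this upgrades the hypothesis to $\mu(\alpha^+)=\mu((\alpha^-)^{-1})=1$. For any unitary $u \in C(\KP_n)$ with $\mu(u)=1$ one has $\mu((u-1)^*(u-1))=2-2\,\mathrm{Re}\,\mu(u)=0$, so Cauchy--Schwarz gives $|\mu(x(u-1))|^2\le \mu(xx^*)\,\mu((u-1)^*(u-1))=0$ for every $x$, whence $\mu(xu)=\mu(x)$. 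Setting $x=\alpha^+$ and $u=(\alpha^-)^{-1}$ and invoking the identity yields $\mu(\rho_0^-)=\mu(\alpha^+)=1$, as required. The main obstacle is purely bookkeeping: identifying the adjoints and inverses of the one-dimensional representations correctly (especially the parity-dependent formulas for $(\sigma_l^\pm)^{-1}$) so that the unified identity $\alpha^+\cdot(\alpha^-)^{-1}=\rho_0^-$ holds in all cases; the Cauchy--Schwarz step is entirely standard.
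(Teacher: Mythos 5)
Your proof is correct, but it follows a genuinely different route from the paper's. The paper works entirely on the Fourier side: it observes that the reducible two-dimensional representation $\X^\alpha$ (namely $\Xuv{u}{0}$ or $\Xuv{u}{\frac{n}{2}}$) containing $\alpha^+\oplus\alpha^-$ satisfies $\widehat{\Fo(a)^{\star k}}(\X^\alpha)=I_2$ for all $k$, and then rules out the types $h_{\Gamma,l}$ and $h_{\Gamma,l,\tau}$ because their characterizations \eqref{CarhGl} and \eqref{CarhGlT} never produce a nonzero diagonal matrix (the off-diagonal entries, when the matrix is nonzero, have modulus $1$); the symmetry \eqref{eqSim} is then obtained as in Lemma \ref{hGamma}. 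You instead reduce to condition $ii)$ of Lemma \ref{hGamma} by an algebraic computation in $C(\KP_n)$ -- the group-like identity $\alpha^+\cdot(\alpha^-)^{-1}=\rho_0^-$ among the one-dimensional representations, including the parity-dependent inverses $(\sigma_l^-)^{-1}$, all of which check out -- combined with the standard multiplicative-domain/Cauchy--Schwarz argument for states: from $\mu\bigl((\alpha^-)^{-1}\bigr)=1$ (which holds since $\widehat{\Fo(a)}\bigl((\alpha^-)^{-1}\bigr)=a_{\alpha^-}=1$) you get $\mu(x u)=\mu(x)$ for all $x$, hence $\mu(\rho_0^-)=\mu(\alpha^+)=1$ and therefore $a_{\rho_0^-}=1$ by Theorem \ref{CondCv}. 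Your route buys independence from the explicit formulas \eqref{CarhGl}--\eqref{CarhGlT} (it only uses the equivalence $ii)\Leftrightarrow iii)$ of Lemma \ref{hGamma}) and makes visible the group structure of the one-dimensional representations, at the cost of the case-by-case verification of adjoints and products of the $\rho_l^\pm$ and $\sigma_l^\pm$; the paper's argument is shorter once Zhang's characterization of the idempotent states is in hand. One small streamlining: the detour through $a_{\beta^{-1}}=\overline{a_\beta}$ to get $\mu(\alpha^+)=1$ is not strictly needed for the conclusion, since Cauchy--Schwarz already yields $\mu(\rho_0^-)=\mu(\alpha^+)$ and you could equally apply the two-sided version with $u=\bigl((\alpha^+)\bigr)$ on the left; but as written the step is justified and correct.
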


\begin{proof}
Let us denote by $\X^{\alpha}$ the corresponding $\Xuv{u}{v}$ with $u \in \Zn$ and $v = 0$ if $\alpha^+$ is $\rho_{n-u}^+$ or $v = \frac{n}{2}$ if $\alpha^+ = \sigma_{n-u}^+$.
Then $\widehat{\Fo(a)^{\star k}}(\X^\alpha) = I_2$.  Hence, by characterizations \eqref{CarhGl} and \eqref{CarhGlT}, the limit state is of type $h_\Gamma$. The same argument as in the preceding lemma gives the property of $\Gamma$. \qed
\end{proof}

\subsubsection{Classification}
\begin{theorem}
Denote by $\mu$ the limit state of the random walk associated to $a$, if it exists.

If $n$ is odd, there are only four possibilities:
\begin{itemize}
	\item condition \eqref{Hcv0} holds, i.e. $\mu = \hwn$ ,
	\item $\forall \alpha \in I(\KP_n), \ \modu{a_\alpha} < d_\alpha \text{ or } a_\alpha = d_\alpha$, and $a_{\rho_0^-} = 1$, i.e. $\mu = h_\Gamma$, with $\Gamma$ a subgroup of $\Zn \times \Zn$ satisfying \eqref{eqSim}
	\item $\forall \alpha \in I(\KP_n), \ \modu{a_\alpha} < d_\alpha \text{ or } a_\alpha = 1$, $a_{\rho_0^-} \neq 1$ but condition \eqref{Hcv0} does not hold, i.e. $\mu = h_{\Gamma, l, \tau}$ with $q = 1$ (and $p = n$), $l = 0$ and $\tau_i = 1$ for all $i \in \Zn$,
	\item otherwise the random walk diverges.
\end{itemize}

If $n$ is even, there are six possibilities:
\begin{itemize}
	\item condition \eqref{Hcv0} holds, i.e. $\mu = \hwn$ ,
	\item $\forall \alpha \in I(\KP_n) \ \modu{a_\alpha} < d_\alpha \text{ or } a_\alpha = d_\alpha$, and $a_{\rho_0^-} = 1$, i.e. $\mu = h_\Gamma$, with $\Gamma$ a subgroup of $\Zn \times \Zn$ satisfying \eqref{eqSim}
	\item $\forall \alpha \in I(\KP_n), \ \modu{a_\alpha} < d_\alpha \text{ or } a_\alpha = 1$, $a_{\rho_0^-} \neq 1$, $a_{\sigma_0^+} = 1$ or $a_{\sigma_0^-} = 1$ but $a_{\sigma_0^+} \neq a_{\sigma_0^-}$, $\modu{a_{\sigma_2^+}}\!<\!1$ and $\modu{a_{\sigma_2^-}}\!<\!1$, i.e. $\mu = h_{\Gamma, l}$ with $q = 2$ and $l$ equals $0$ if $a_{\sigma_0^+} = 1$, or $1$ if $a_{\sigma_0^-} = 1$,
	\item $\forall \alpha \in I(\KP_n),\modu{a_\alpha} < d_\alpha \text{ or } a_\alpha = 1$, $a_{\rho_0^-} \neq 1$, $\modu{a_{\sigma_0^+}} < 1$ and $\modu{a_{\sigma_0^-}} < 1$ but condition \eqref{Hcv0} does not hold, i.e. $\mu = h_{\Gamma, l, \tau}$ with $q = 1$ (and $p = n$), $l = 0$ and $\tau_i = 1$ for all $i \in \Zn$,
	\item $\forall \alpha \in I(\KP_n), \ \modu{a_\alpha} < d_\alpha \text{ or } a_\alpha = 1$, $a_{\rho_0^-} \neq 1$, $a_{\sigma_0^+} = a_{\sigma_2^+} = 1$ or $a_{\sigma_0^-} = a_{\sigma_2^-} = 1$ but $a_{\sigma_0^+} \neq a_{\sigma_0^-}$, i.e. $\mu = h_{\Gamma, l, \tau}$ with $q = 2$ (and $p = \frac{n}{2}$),
	\item otherwise the random walk diverges.
\end{itemize}
\label{BigTh}
\end{theorem}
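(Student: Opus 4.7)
The overall strategy is to reduce the classification to a matching problem between the Fourier data of $\Fo(a)^{\star k}$ and Zhang's explicit idempotent-state characterisations \eqref{CarhGl} and \eqref{CarhGlT}. The ``otherwise diverges'' branches are immediate from Theorem \ref{CondCv}: any $|a_\alpha|=d_\alpha$ with $a_\alpha\neq d_\alpha$ prevents convergence of the corresponding block. From now on every $a_\alpha/d_\alpha$ lies in $\uD\cup\{1\}$. The branch $\mu=\hwn$ is exactly Theorem \ref{ThHaar}; assume therefore that $\mu\neq \hwn$, so by Proposition \ref{IdemCent} the limit $\mu$ is a central idempotent state of type $h_\Gamma$, $h_{\Gamma,l}$ or $h_{\Gamma,l,\tau}$, and Lemma \ref{hGamma} disposes of the $h_\Gamma$ case via the equivalent condition $a_{\rho_0^-}=1$.

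The key structural input for the remaining cases is the computation in the proof of Theorem \ref{CondCv}: for the ``inner'' indices $1\leq v\leq\lfloor\frac{n-1}{2}\rfloor$ the block $\widehat{\Fo(a)}(\Xuv{u}{v})=\tfrac{1}{2}a_{\Xuv{n-u}{v}}I_2$ is scalar, so its $k$-th power converges only to $0$ or to $I_2$; meanwhile the ``boundary'' blocks at $v=0$ (and $v=\tfrac{n}{2}$ for even $n$) are diagonalised by the Hadamard basis $\tfrac{1}{\sqrt 2}(1,\pm 1)$ with eigenvalues $\{a_{\rho_{n-u}^+},a_{\rho_{n-u}^-}\}$ (respectively $\{a_{\sigma_{n-u}^+},a_{\sigma_{n-u}^-}\}$). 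Confronting this with \eqref{CarhGl}, whose off-diagonal entries $\eta^{\pm vl}$ never vanish at its active indices $u=0$, $\tfrac{n}{q}\mid v$, excludes $h_{\Gamma,l}$ whenever any such $v$ falls in the inner range: this kills $h_{\Gamma,l}$ entirely for odd $n$ and restricts it to $q=2$ when $n$ is even. The same scalar-matrix constraint applied to \eqref{CarhGlT} at the inner $v$'s forces $\tau_v\eta^{vl}=\tau_{-v}\eta^{-vl}$ at every active $(u,v)$, which combined with the centrality symmetry $\mu_{(i,j)}=\mu_{(i,-j)}$ from Lemma \ref{hGamma} narrows $(l,\tau)$ to the explicit configurations listed in the theorem.

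It then remains to read off the limit from the boundary-block data. When $a_{\rho_0^-}\neq 1$ the limit at $\Xuv{u}{0}$ equals the rank-$1$ projector $\tfrac{1}{2}\begin{pmatrix}1&1\\1&1\end{pmatrix}$ exactly when $a_{\rho_{n-u}^+}=1$ and $|a_{\rho_{n-u}^-}|<1$; requiring this for every $u$ gives the third bullet (matching $h_{\Gamma,l,\tau}$ with $q=1$, $l=0$, $\tau\equiv 1$). For even $n$, the additional freedom at $v=\tfrac{n}{2}$ governed by the $\sigma_u^\pm$'s produces the two extra cases: a solitary $a_{\sigma_0^\pm}=1$ with the other $\sigma$-eigenvalues contracting yields $h_{\Gamma,l}$ with $q=2$, while $a_{\sigma_0^\pm}=a_{\sigma_2^\pm}=1$ with matching signs yields $h_{\Gamma,l,\tau}$ with $q=2$. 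The main obstacle is the even-$n$ bookkeeping: verifying that these really exhaust the non-Haar, non-$h_\Gamma$ possibilities, and in particular that no $h_{\Gamma,l,\tau}$ with $q\geq 3$ can arise, requires checking the scalar-matrix constraint at enough inner $\Xuv{u}{v}$'s to force $(l,\tau)$ into the allowed list.
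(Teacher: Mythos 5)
Your proposal is correct and follows essentially the same route as the paper: dispose of divergence via Theorem \ref{CondCv}, of $\hwn$ via Theorem \ref{ThHaar}, and of $h_\Gamma$ via Lemma \ref{hGamma}, then play the scalarness of the inner blocks $\widehat{\Fo(a)}(\Xuv{u}{v})=\tfrac{1}{2}a_{\Xuv{n-u}{v}}I_2$ against Zhang's characterizations \eqref{CarhGl}--\eqref{CarhGlT} to force $q\in\{1,2\}$, and finally separate the even-$n$ cases using the $v=0$ and $v=\tfrac{n}{2}$ blocks (in particular $\Xuv{0}{\frac{n}{2}}$ versus $\Xuv{n-2}{\frac{n}{2}}$), exactly as in the paper. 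One small imprecision: at an active inner index the matrices in \eqref{CarhGlT} have unimodular off-diagonal entries, so the scalar-limit constraint does not ``force $\tau_v\eta^{vl}=\tau_{-v}\eta^{-vl}$'' --- it rules out any active inner index outright, which is precisely what yields $q\leq 2$; you in fact state this correct version in your treatment of $h_{\Gamma,l}$, and it applies verbatim to $h_{\Gamma,l,\tau}$.
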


\begin{remark}
By Proposition \ref{IdemCent}, the limit states are central for the convolution product. One can check that an idempotent state arises as the limit of such a random walk if and only if it is central.
\end{remark}

\begin{proof}
We only need to look at the case when the limit $\mu$ exists but is not the Haar state or some $h_\Gamma$. We will again use the characterizations \eqref{CarhGl} and \eqref{CarhGlT} of the idempotent states $h_{\Gamma, l}$ and $h_{\Gamma, l, \tau}$.

The limit of the $\widehat{\Fo(a)^{\star k}}(\Xuv{u}{v})$'s is null for all $v$ between $1$ and $\lfloor \frac{n-1}{2} \rfloor$, which is equivalent to  $q$ equals $1$ or $2$.

Since $q$ divides $n$, if $n$ is odd, the only possibility is $q = 1$. Hence,  $\mu$ is the state $h_{\{0\}\times \Zn, 0, (1)_{i = 0}^{n-1}}$.

If $n$ is even, $\Xuv{0}{\frac{n}{2}}$ exists, and
\[\widehat{\Fo(a)^{\star k}}(\Xuv{0}{\frac{n}{2}}) = \frac{1}{2}\begin{pmatrix}a_{\sigma_{0}^+}^k + a_{\sigma_{0}^-}^k & a_{\sigma_{0}^+}^k - a_{\sigma_{0}^-}^k \\ a_{\sigma_{0}^+}^k - a_{\sigma_{0}^-}^k & a_{\sigma_{0}^+}^k + a_{\sigma_{0}^-}^k\end{pmatrix}\]
tends to $\begin{pmatrix}0&0\\0&0\end{pmatrix}$, $\frac{1}{2}\begin{pmatrix}1&1\\1&1\end{pmatrix}$ or $\frac{1}{2}\begin{pmatrix}1&-1\\-1&1\end{pmatrix}$.

Let us note that the limit is null if and only if $a_{\sigma_0^+}$ and $a_{\sigma_0^-}$ have both modulus less than $1$. Since, among the possible limit states, the equality $\widehat{\mu}(\Xuv{0}{\frac{n}{2}}) = \begin{pmatrix}0&0\\0&0\end{pmatrix}$ characterizes $h_{\{0\}\times \Zn, 0, (1)_{i = 0}^{n-1}}$, we get the fourth case.

Assume now that the previous limit is not the null matrix. The case $u = n - 2$ and $v = \frac{n}{2}$ will determine the limit, since
\[\widehat{h_{\Gamma, l}}(\Xuv{n-2}{\frac{n}{2}}) = \begin{pmatrix}0&0\\0&0\end{pmatrix} \ \ \text{ whereas }\ \ \widehat{h_{\Gamma, l, \tau}}(\Xuv{n-2}{\frac{n}{2}}) = \widehat{h_{\Gamma, l, \tau}}(\Xuv{0}{\frac{n}{2}})\text{ .}\]

We have
\[\widehat{\Fo(a)^{\star k}}(\Xuv{n-2}{\frac{n}{2}}) = \frac{1}{2}\begin{pmatrix}a_{\sigma_{2}^+}^k + a_{\sigma_{2}^-}^k & a_{\sigma_{2}^+}^k - a_{\sigma_{2}^-}^k \\ a_{\sigma_{2}^+}^k - a_{\sigma_{2}^-}^k & a_{\sigma_{2}^+}^k + a_{\sigma_{2}^-}^k\end{pmatrix}\]
whose limit is null, if and only if $\modu{a_{\sigma_2^+}}\!<\!1$ and $\modu{a_{\sigma_2^-}}\!<\!1$. This concludes the proof of the theorem. \qed
\end{proof}

\begin{remark}
If we look at the characterizations \eqref{CarhGl} and \eqref{CarhGlT}, we have also the following equivalences:
\begin{multline*}
\forall l \in \Zn, a_{\rho_l^+}=1 \text{ and } \modu{a_\alpha} < d_\alpha \text{ for all the others } \alpha \in I(\KP_n)\\
\Longleftrightarrow  \ \ \mu = h_{\{0\}\times\Zn, 1, (1)_{i = 0}^{n-1}}
\end{multline*}
\begin{multline*}
a_{\rho_0^+} = 1, a_{\sigma_0^+} = 1 \text{ and } \modu{a_\alpha} < d_\alpha \text{ for all the others } \alpha \in I(\KP_n)\\
\Longleftrightarrow \ \ \mu = h_{\Zn \times 2\Zn, 0}\hspace{0.9cm}
\end{multline*}
\begin{multline*}
a_{\rho_0^+} = 1, a_{\sigma_0^-} = 1 \text{ and } \modu{a_\alpha} < d_\alpha \text{ for all the others } \alpha \in I(\KP_n)\\
\Longleftrightarrow \ \ \mu = h_{\Zn \times 2\Zn, 1}\hspace{0.9cm}
\end{multline*}
\begin{multline*}
\forall i \in \mathbb{Z}_{\frac{n}{2}}, \ a_{\rho_{2i}^+} = 1, \forall i \in \mathbb{Z}_{\frac{n}{2}}, \ a_{\sigma_{2i}^+} = 1 \text{ or } \forall i \in \mathbb{Z}_{\frac{n}{2}}, \ a_{\sigma_{2i}^-} = 1 \text{ but } a_{\sigma_0^+} \neq a_{\sigma_0^-},\\
\text{ and } \modu{a_\alpha} < d_\alpha \text{ for all the others } \alpha \in I(\KP_n)\\
\Longleftrightarrow\ \ \mu = h_{\frac{n}{2}\Zn \times 2\Zn, l, \tau}\hspace{0.5cm}
\end{multline*}
\end{remark}

\begin{remark}
Thanks to Remark \ref{rkMod} we get that the random walk diverges if and only if there exists $\alpha$ in $I(\KP_n)$ such that $\modu{a_\alpha} = d_\alpha$ but $a_\alpha \neq d_\alpha$. In this case, the random walk can be cyclic or not.
\end{remark}

\begin{example}
Let us look again at the state $\phi_p$ defined in Example \ref{ex1}. By Theorem \ref{CondCv}, the random walk diverges. Moreover, we have
\[\phi_p^{\star k} = \sum\limits_{l = 0}^{q - 1} \eta^{klp} \Fo(\rho_{lp}^+)\text{ ,}\]
for $q > 1$ such that $pq = n$. Thus the random walk is cyclic, with period $q$.

Let us note that $\rho_0^+ - \rho_0^- = 2 \sum\limits_{m = 1}^n E_{m,m}$ clearly satisfies the conditions of Lemma 6.4 in \cite{idem}. The associated random walk is also cyclic, with period $2$, for all $n \geq 2$ (even for odd $n$). The state is the Haar state when $k$ is even and $2 \sum\limits_{m = 1}^n \Fo(E_{m,m})$ for $k$ odd.

Thus, we can define the random walk
\[\psi_p = \Fo\left( (\rho_0^+ - \rho_0^-) \sum_{l = 0}^{q-1} \eta^{lp}\rho_{lp}^+\right) = \sum_{l = 0}^{q-1} \left(\eta^{lp}\Fo(\rho_{lp}^+) - \eta^{lp} \Fo(\rho_{lp}^-)\right)\]
which is also cyclic, with period $\lcm(2, q)$. In particular, it has period $2q$ when $n$ is odd.
\end{example}

\begin{example}
Let us note that $\rho_0^+ - \rho_0^- + \frac{1}{2} \sum\limits_{l = 1}^{n-1} (\rho_l^+ - \rho_l^-)$ defines a state. One can check that the associated random walk is not cyclic and admits no limit. 
\end{example}

\subsection{Random walks in the dual of $\KP_n$}

We can define the dual group of a finite quantum group $\mathbb{G} = (C(\mathbb{G}), \Delta)$. This dual, denoted $\hat{\mathbb{G}} = (C(\hat{\mathbb{G}}), \hat{\cop})$, is again a finite quantum group. The algebra  $C(\hat{\mathbb{G}})$ is the set of all linear forms defined on $C(\mathbb{G})$. It is also isomorphic to the direct sum of the non-equivalent unitary irreducible corepresentations of $C(\mathbb{G})$, thanks to the Fourier transform. All the structures are defined by duality.

Let us denote the dual of $\KP_n$ by $\widehat{\KP_n} = (C(\widehat{\KP_n}), \hat{\cop})$. Its algebra admits the dual basis $\{e^{(i,j)}\}_{i,j \in \Zn} \cup \{E^{i,j}\}_{1 \leq i,j \leq n}$. Then its irreducible characters are all the $e^{(i,j)}$'s and $\chi(\hat{\X}) = \sum\limits_{i = 1}^n E^{ii}$. The Haar state, dual of the counit of $\KP_n$, is given by
\[\int_{\widehat{\KP_n}}\!\!\left(\sum_{i,j \in \Zn} x_{(i,j)}e^{(i,j)} + \sum_{1 \leq i,j \leq n} X_{i,j} E^{ij}\right) = x_{(0,0)} \text{ .}\]

The Fourier transform on $C(\widehat{\KP_n})$ is denoted $\Fo^{-1}$, since it allows us to go back in the Sekine finite quantum group $\KP_n$, identifying $\Fo^{-1}(e^{(i,j)})$ with $e_{(-i, -j)}$ and $\Fo^{-1}(E^{ij})$ with $\frac{1}{n} E_{ij}$.

Thus the Fourier transform of element $a = \hspace{-8pt}\sum\limits_{\alpha \in I(\widehat{\KP_n})}\hspace{-8pt}a_\alpha \chi_\alpha = \sum\limits_{i,j \in \Zn}\hspace{-3pt}a_{(i,j)}e^{(i,j)} + g_{\hat{\X}} \sum\limits_{i = 1}^n E^{ii}$ defines a state on $C(\widehat{\KP_n})$ if and only if $\Fo^{-1}(a)$ is a positive element in $C(\KP_n)$ and $\int_{\widehat{\KP_n}}\!\!a = 1$. This is equivalent to the fact that $a_\alpha$ is real and non negative for any $\alpha \in I(\widehat{\KP_n})$ and $a_{(0,0)} = a_{e^{(0,0)}} = 1$.

The same type of computation as above gives us the following result:
\begin{proposition}
The random walk associated to such an element $a$ of $C(\widehat{\KP_n})$ converges to the Haar state $\int_{\widehat{\KP_n}}$ if and only if
\[\forall \alpha \in I(\widehat{\KP_n})\setminus \{e^{(0,0)}\}, \ a_\alpha < d_\alpha \text{ .}\]
If this condition is not satisfied, but $a_\alpha \leq d_\alpha$ for any irreducible representation $\alpha$, then the random walk converges to some central idempotent state, i.e.\ an element of $C(\KP_n)$ of the form $\sum\limits_{i,j \in \Zn}\hspace{-2pt}\varepsilon_{(i,j)} e_{(i,j)} + \varepsilon_{\hat{\X}} \sum\limits_{i = 1}^n E_{ii}$, with $\varepsilon_\alpha = 0$ or $1$, for any $\alpha$ in $I(\widehat{\KP_n})$ and $\varepsilon_{(0,0)} = 1$. Moreover, every idempotent element in the center of $C(\KP_n)$ can be obtained this way.

Otherwise, there exists $\alpha$ such that $a_\alpha > d_\alpha$ and the random walk diverges.
\end{proposition}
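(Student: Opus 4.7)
The plan is to transplant the arguments of Theorems \ref{CondCv} and \ref{ThHaar} to the dual finite quantum group $\widehat{\KP_n}$, whose irreducible representations are the $n^2$ one-dimensional characters $e^{(i,j)}$ (with $d_{e^{(i,j)}}=1$) and the single $n$-dimensional representation $\hat{\X}$ (with $d_{\hat{\X}}=n$). Since $\widehat{\KP_n}$ is again a finite, hence Kac-type, quantum group, the Quantum Diaconis-Shahshahani upper and lower bound lemmas apply verbatim. Because each $a_\alpha$ is real and non-negative, the trichotomy $a_\alpha<d_\alpha$ vs.\ $a_\alpha=d_\alpha$ vs.\ $a_\alpha>d_\alpha$ replaces the unit-disc criterion used in Theorem \ref{CondCv}.

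The first concrete step is to compute the Fourier matrices $\widehat{\Fo^{-1}(a)}(\beta)$ for every $\beta\in I(\widehat{\KP_n})$. Using linearity of $\Fo^{-1}$, the Kac-type orthogonality relations, and the identifications $\Fo^{-1}(e^{(i,j)})=e_{(-i,-j)}$ and $\Fo^{-1}(E^{ij})=\frac{1}{n}E_{ij}$ recalled in the preamble, one directly computes $\widehat{\Fo^{-1}(a)}(e^{(i,j)})=a_{(-i,-j)}$ (a real scalar) and $\widehat{\Fo^{-1}(a)}(\hat{\X})=\frac{a_{\hat{\X}}}{n}I_n$ (a scalar multiple of the identity, consistent with $\Fo^{-1}(a)$ lying in the centre of $C(\KP_n)$). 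Feeding these into the upper bound lemma yields $\Fo^{-1}(a)^{\star k}\to\int_{\widehat{\KP_n}}$ as soon as $a_\alpha<d_\alpha$ for every non-trivial $\alpha$; conversely, if $a_\alpha=d_\alpha$ for some non-trivial $\alpha$ the lower bound keeps $\|\Fo^{-1}(a)^{\star k}-\int_{\widehat{\KP_n}}\|_{QTV}\ge\frac{1}{2}$, so the limit, when it exists, cannot be Haar; and if $a_\alpha>d_\alpha$ the ratio $(a_\alpha/d_\alpha)^k$ blows up, yielding divergence. This settles the first and third assertions.

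For the middle regime $0\le a_\alpha\le d_\alpha$ with at least one boundary equality, the ratios $(a_\alpha/d_\alpha)^k$ converge pointwise to $\indic_{a_\alpha=d_\alpha}$, so the Fourier matrices of $\Fo^{-1}(a)^{\star k}$ converge entrywise. Inverting the Fourier transform term by term, the limit of $\Fo^{-1}(a)^{\star k}$ corresponds in $C(\KP_n)$ to an element of the announced form $\sum_{i,j}\varepsilon_{(i,j)}e_{(i,j)}+\varepsilon_{\hat{\X}}\sum_{i=1}^n E_{ii}$ with $\varepsilon_\alpha\in\{0,1\}$ and $\varepsilon_{(0,0)}=1$ (forced by $a_{(0,0)}=1=d_{e^{(0,0)}}$). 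Proposition \ref{IdemCent}, whose argument goes through in any finite quantum group and hence in $\widehat{\KP_n}$, automatically makes this limit a central idempotent state; conversely, any such central idempotent is realised by setting $a_\alpha:=d_\alpha\varepsilon_\alpha$, which satisfies the state characterisation from the preamble and whose convolution powers converge to precisely that idempotent by the same computation. The main bookkeeping point I expect is threading the three-way identification $a\leftrightarrow\Fo^{-1}(a)\in C(\KP_n)\leftrightarrow$ state on $C(\widehat{\KP_n})$, in particular the factor $1/n$ coming from $\Fo^{-1}(E^{ij})=\frac{1}{n}E_{ij}$ that relates the scalar $a_{\hat{\X}}$ to the eigenvalue of $\widehat{\Fo^{-1}(a)}(\hat{\X})$; once these conventions are pinned down, the remainder is a transcription of the proofs already given in Section 3.
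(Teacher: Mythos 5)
Your proposal is correct and follows essentially the same route as the paper: compute $\widehat{\Fo^{-1}(a)}(e^{(i,j)})=a_{(-i,-j)}$ and $\widehat{\Fo^{-1}(a)}(\hat{\X})=\frac{a_{\hat{\X}}}{n}I_n$, feed these into the quantum Diaconis--Shahshahani upper and lower bounds for the Haar-convergence criterion and the divergence case, and invoke the Kac-type version of Proposition \ref{IdemCent} together with the explicit form of $a^{\star k}$ to identify the limit in the boundary regime with a central idempotent of the announced form. The only cosmetic difference is that the paper characterises central idempotent states directly (centrality of the matrix block forcing a scalar multiple of $I_n$) whereas you realise each one as a limit by taking $a_\alpha=d_\alpha\varepsilon_\alpha$; both settle the ``moreover'' claim.
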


\begin{proof}
Let us note that, for any $\beta \in I(\widehat{\KP_n})$
\[\widehat{\Fo^{-1}(a)}(\beta) = \begin{cases} a_{(-i, -j)} &\text{ if } \beta = e^{(i,j)}\\
\frac{a_{\hat{\X}}}{n} I_n &\text{ if } \beta = \hat{\X} \end{cases} \text{ .}\]
Thus, the random walk converges if and only if $a_\alpha < d_\alpha$ for all non trivial $\alpha \in I(\widehat{\KP_n})$.

Then the distance between the random walk and the Haar state can bounded from above and below in the following way:
\[\frac{1}{2} \left(\frac{a_\alpha}{d_\alpha}\right)^k \leq \norm{\Fo^{-1}(a)^{k} - \int_{\widehat{\KP_n}}}_{QTV} \leq \frac{1}{2} \sqrt{\sum_{\substack{\beta \in I(\widehat{\KP_n})\\ \beta \neq e^{(0,0)}}} \frac{a_\beta^{2k}}{d_\beta^{2(k-1)}}}\]
for any non trivial irreducible representation $\alpha$. This gives the necessary and sufficient condition for the random walk to converge to the Haar state.

Since $\widehat{\KP_n}$ is of Kac type, Proposition \ref{IdemCent} holds also in this case. Note that states on $C(\widehat{\KP_n})$ correspond to positive elements in $C(\KP_n)$ with coefficient $1$ in front of $e_{(0,0)}$, thus of the form $\sum\limits_{i, j \in \Zn} x_{(i,j)}e_{(i,j)} + \sum\limits_{i,j = 1}^n X_{i,j}E_{i,j}$ where the $x_{(i,j)}$'s are non negative and $X = (X_{i,j})_{1 \leq i,j \leq n}$ is positive semi-definite. The state is central if and only if the matrix $X$ is central in $M_n(\CC)$, i.e.\ is a scalar multiple of the identity matrix $I_n$. Then the state is  central and idempotent if and only if it is of the announced form.\qed
\end{proof}

\paragraph*{Acknowledgements :}
The author would like to thank Haonan Zhang, for providing her with his preprint \cite{Haonan}. This work was supported by the French "Investissements d'Avenir" program, project ISITE-BFC (contract ANR-15-IDEX-03)


\begin{thebibliography}{10}

\bibitem{arXiv1}
I.~{Baraquin}.
\newblock {Trace of powers of representations of finite quantum groups}.
\newblock {\em ArXiv 1811.00795}, November 2018.

\bibitem{DiaconisExCO}
P. Diaconis.
\newblock {\em Group representations in probability and statistics}, volume~11
  of {\em Institute of Mathematical Statistics Lecture Notes---Monograph
  Series}.
\newblock Institute of Mathematical Statistics, Hayward, CA, 1988.

\bibitem{DiaconisDefCO}
P. Diaconis.
\newblock The cutoff phenomenon in finite {M}arkov chains.
\newblock {\em Proc. Nat. Acad. Sci. U.S.A.}, 93(4):1659--1664, 1996.

\bibitem{RwSn}
P. Diaconis and M. Shahshahani.
\newblock Generating a random permutation with random transpositions.
\newblock {\em Z. Wahrsch. Verw. Gebiete}, 57(2):159--179, 1981.

\bibitem{uweKP}
U. Franz and R. Gohm.
\newblock Random walks on finite quantum groups.
\newblock In {\em Quantum independent increment processes. {II}}, volume 1866
  of {\em Lecture Notes in Math.}, pages 1--32. Springer, Berlin, 2006.

\bibitem{idem}
U. Franz and A. Skalski.
\newblock On idempotent states on quantum groups.
\newblock {\em J. Algebra}, 322(5):1774--1802, 2009.

\bibitem{cutOffOn}
A. Freslon.
\newblock Cut-off phenomenon for random walks on free orthogonal quantum
  groups.
\newblock {\em Probability Theory and Related Fields}, Jul 2018.

\bibitem{cutOffUn}
A. {Freslon}.
\newblock {Quantum reflections, random walks and cut-off}.
\newblock {\em ArXiv 1802.09402}, February 2018.
\newblock To appear in Internat. J. Math.

\bibitem{KPtrad}
G.~I. Kac and V.~G. Paljutkin.
\newblock Finite ring groups.
\newblock {\em Trudy Moskov. Mat. Ob\v{s}\v{c}.}, 15:224--261, 1966.
\newblock Translated in Trans. Moscow Math. Soc. (1967), 251--284.

\bibitem{MvD}
Ann Maes and Alfons Van~Daele.
\newblock Notes on compact quantum groups.
\newblock {\em Nieuw Arch. Wisk. (4)}, 16(1-2):73--112, 1998.

\bibitem{McCarthy}
J.~P. {McCarthy}.
\newblock {Random Walks on Finite Quantum Groups: Diaconis-Shahshahani Theory
  for Quantum Groups}.
\newblock {\em arXiv 1709.09357}, September 2017.

\bibitem{cutOffJP}
J.~P. {McCarthy}.
\newblock {Diaconis-Shahshahani Upper Bound Lemma for Finite Quantum Groups}.
\newblock {\em ArXiv 1810.04935}, October 2018.

\bibitem{NeshveyevTuset}
Sergey Neshveyev and Lars Tuset.
\newblock {\em Compact quantum groups and their representation categories},
  volume~20 of {\em Cours Sp\'{e}cialis\'{e}s [Specialized Courses]}.
\newblock Soci\'{e}t\'{e} Math\'{e}matique de France, Paris, 2013.

\bibitem{pal}
A. Pal.
\newblock A counterexample on idempotent states on a compact quantum group.
\newblock {\em Lett. Math. Phys.}, 37(1):75--77, 1996.

\bibitem{roussel}
S. Roussel.
\newblock Ph\'{e}nom\`ene de cutoff pour certaines marches al\'{e}atoires sur
  le groupe sym\'{e}trique.
\newblock {\em Colloq. Math.}, 86(1):111--135, 2000.

\bibitem{SaloffCoste}
L. Saloff-Coste.
\newblock {\em Random Walks on Finite Groups}, pages 263--346.
\newblock Springer Berlin Heidelberg, Berlin, Heidelberg, 2004.

\bibitem{Haonan}
H. Zhang.
\newblock Idempotent states on {S}ekine quantum groups.
\newblock {\em arXiv 1809.02798}, September 2018.

\end{thebibliography}
\end{document}